\newif\ifPDF
\newtheorem{thm}{Theorem}[section]
\newtheorem{cor}[thm]{Corollary}
\newtheorem{lem}[thm]{Lemma}
\newtheorem{prop}[thm]{Proposition}
\theoremstyle{definition}
\newtheorem{defn}[thm]{Definition}
\theoremstyle{remark}
\newtheorem{rem}[thm]{Remark}
\numberwithin{equation}{section}
\newcommand{\norm}[1]{\left\Vert#1\right\Vert}
\newcommand{\abs}[1]{\left\vert#1\right\vert}
\newcommand{\Real}{\mathbb R}
\newcommand{\Int}{\mathbb Z}
\newcommand{\Comp}{\mathbb C}
\newcommand{\eps}{\varepsilon}
\newcommand{\Kzero}{\mathrm{K}_0}
\newcommand{\Kone}{\mathrm{K}_1}
\begin{document}

%OPTIONS FOR LINE NUMBERS----------------------------------
%\linenumbers

\title{$\mathcal Z$-stability of $\mathrm{C}(X)\rtimes\Gamma$}

\author{Zhuang Niu}
\address{Department of Mathematics, University of Wyoming, Laramie, WY, 82071, USA}
\email{zniu@uwyo.edu}

\thanks{The research is supported by an NSF grant (DMS-1800882).}

%\thanks{}
%\keywords{}
\date{\today}
%\dedicatory{}
%\commby{}

%------------------------------------------abstract--------------------------------------

\begin{abstract}
Let $(X, \Gamma)$ be a free and minimal topological dynamical system, where $X$ is a separable compact Hausdorff space and $\Gamma$ is a countable infinite discrete amenable group. It is shown that if $(X, \Gamma)$ has the Uniform Rokhlin Property and Cuntz comparison of open sets, then $\mathrm{mdim}(X, \Gamma)=0$ implies that $(\mathrm{C}(X) \rtimes\Gamma)\otimes\mathcal Z \cong \mathrm{C}(X) \rtimes\Gamma$, where $\mathrm{mdim}$ is the mean dimension and $\mathcal Z$ is the Jiang-Su algebra. In particular, in this case, $\mathrm{mdim}(X, \Gamma)=0$ implies that the C*-algebra $\mathrm{C}(X) \rtimes\Gamma$ is classified by the Elliott invariant. 
\end{abstract}

\maketitle

\section{Introduction}

Let $\Gamma$ be a discrete amenable group, and let $(\Omega, \mu)$ be a $\sigma$-finite standard measure space. Let $(\Omega, \mu) \curvearrowleft \Gamma$ be a free and ergodic action with absolutely continuous finite invariant measure. By the classification of injective von Neumann algebras, it is well known that the von Neumann II$_1$-factor $\mathrm{L}^\infty(\Omega, \mu)\rtimes\Gamma$ is isomorphic to the unique hyperfinite II$_1$-factor $R$. Thus, all such crossed products $\mathrm{L}^\infty(\Omega, \mu)\rtimes\Gamma$ are isomorphic.

In the topological setting, consider a compact separable Hausdorff space $X$, and consider a minimal and free action $X\curvearrowleft \Gamma$. Then the crossed product C*-algebra $\mathrm{C}(X)\rtimes \Gamma$ is simple separable unital  nuclear and satisfies the UCT. Thus it is a very natural object for the Elliott's classification program of nuclear C*-algebras.

Many efforts have been devoted to the classifiability of  $\mathrm{C}(X)\rtimes\Gamma$ (in term of the K-theoretical Elliott invariant); see, for instance, \cite{Put-PJM}, \cite{QL-Ph-min-diff} \cite{LP-Dym}, \cite{TW-Dym-1}, \cite{Szabo-Z}, \cite{Karen-sphere}, \cite{Winter-TA}, etc. However, as shown by Giol and Kerr in \cite{GK-Dyn}, there exist minimal and free actions $X\curvearrowleft \mathbb Z$ such that the C*-algebras $A=\mathrm{C}(X) \rtimes \Int$ cannot be classified by the Elliott invariant, and these C*-algebras do not absorb the Jiang-Su algebra $\mathcal Z$ tensorially (i.e., $A \otimes\mathcal Z \ncong A$).

The dynamical systems constructed in \cite{GK-Dyn} have non-zero mean (topological) dimension; and in \cite{EN-MD0}, it is shown that if a minimal and free $\mathbb Z$-action has zero mean dimension (this particularly includes all strictly ergodic systems and all minimal dynamical systems with finite topological entropy, see \cite{Lind-MD}), then the C*-algebra $\mathrm{C}(X) \rtimes \Int$ must be $\mathcal Z$-absorbing and is classifiable (see \cite{ENST-ASH} and \cite{EGLN-ASH}).

In this note, one considers an arbitrary discrete amenable group $\Gamma$, and studies the $\mathcal Z$-stability of $\mathrm{C}(X) \rtimes\Gamma$. Under the assumption that $(X, \Gamma)$ has the Uniform Rokhlin Property (URP) and Cuntz comparison of open sets (COS), which are introduced in \cite{Niu-MD-Z}, one has that $\mathrm{mdim}(X, \Gamma) = 0$ implies that $(\mathrm{C}(X) \rtimes\Gamma)\otimes\mathcal Z \cong \mathrm{C}(X) \rtimes\Gamma$, where $\mathrm{mdim}$ is the mean dimension. %$\mathrm{C}(X)\rtimes\Gamma$ is the crossed-product C*-algebra, and $\mathcal Z$ is the Jiang-Su algebra. 
In particular, this implies that $\mathrm{C}(X) \rtimes\Gamma$ is classified by its Elliott invariant.

Recall
\begin{defn}[Definition 3.1 and Definition 4.1 of \cite{Niu-MD-Z}]
A topological dynamical system $(X, \Gamma)$, where $\Gamma$ is a discrete amenable group, is said to have Uniform Rokhlin Property (URP) if for any $\eps>0$ and any finite set $K\subseteq \Gamma$, there exist closed sets $B_1, B_2, ..., B_S \subseteq X$ and $(K, \eps)$-invariant sets $\Gamma_1, \Gamma_2, ..., \Gamma_S \subseteq \Gamma$ such that
$$B_s\gamma,\quad \gamma\in \Gamma_s,\ s=1, ..., S, $$
are mutually disjoint and
$$\mathrm{ocap}(X\setminus\bigsqcup_{s=1}^S\bigsqcup_{\gamma\in \Gamma_s}B_s\gamma) < \eps,$$
where $\mathrm{ocap}$ denote the orbit capacity (see, for instance, Definition 5.1 of \cite{Lindenstrauss-Weiss-MD}).

The dynamical system $(X, \Gamma)$ is said to have $(\lambda, m)$-Cuntz-comparison of open sets, where $\lambda\in (0, 1]$ and $m\in \mathbb N$, if for any open sets $E, F\subseteq X$ with $$ \mu(E) < \lambda \mu(F),\quad \mu \in\mathcal M_1(X, \Gamma),$$ 
where $\mathcal{M}_1(X, \Gamma)$ is the simplex of all invariant probability measures on $X$, then
$$\varphi_E \precsim \underbrace{\varphi_F\oplus\cdots \oplus \varphi_F}_m\quad\mathrm{in}\ \mathrm{C}(X)\rtimes\Gamma,$$ 
where $\varphi_E$ and $\varphi_F$ are continuous functions supporting on $E$ and $F$ respectively.

The dynamical system $(X, \Gamma)$ is said to have Cuntz comparison of open sets (COS) if it has $(\lambda, m)$-Cuntz-comparison on open sets for some $\lambda$ and $m$.
\end{defn}

The properties of (URP) and (COS) seem to hold for all free and minimal actions by a finitely generated discrete amenable group: any free minimal $\mathbb Z^d$-action has the (URP) and has $(\frac{1}{4}, (2\lfloor\sqrt{d}\rfloor + 1)^d +1)$-Cuntz-comparison of open sets (\cite{Niu-MD-Zd}); any free and minimal $\Gamma$-action has the (URP) and has $(\frac{1}{4}, 1)$-Cuntz-comparison of open sets if $\Gamma$ has subexponential growth and $(X, \Gamma)$ is an extension of a Cantor system (\cite{Niu-MD-Z}).

In \cite{Niu-MD-Z}, it is shown that if $(X, \Gamma)$ has the (URP) and (COS), then the comparison radius of the C*-algebra $\mathrm{C}(X)\rtimes\Gamma$ is at most half of the mean dimension of $(X, \Gamma)$. In particular, if $\mathrm{mdim}(X,  \Gamma) = 0$, then the C*-algebra $\mathrm{C}(X)\rtimes\Gamma$ has the strict comparison of positive elements (see Definition \ref{rank-fn}), which, as a part of the Toms-Winter conjecture, to imply the $\mathcal Z$-stability (this has been verified in the case that the C*-algebra has finitely many extreme tracial states in \cite{Matui-Sato-CP}, and then been generalized independently to the case that the set of extreme tracial states is finite dimensional in \cite{Sato-CP}, \cite{KR-CenSeq}, and \cite{TWW-Z}, and then to the case that the algebra has Uniform Property Gamma in \cite{CETWW-CP}).

Under the assumption that $(X, \Gamma)$ has the small boundary property (SBP) (which implies zero mean dimension, %$\mathrm{mdim}(X, \sigma, \Gamma) = 0$, 
see \cite{Lindenstrauss-Weiss-MD}, and is shown in \cite{Lind-MD} and \cite{GLT-Zk} to be equivalent to zero mean dimension in the case  $\Gamma=\Int^d$), Kerr and Szabo show in \cite{KS-comparison} (Theorem 9.4) that the C*-algebra $\mathrm{C}(X)\rtimes\Gamma$ has the Uniform Property Gamma, and hence the strict comparison of positive elements implies $\mathcal Z$-stability for $\mathrm{C}(X)\rtimes\Gamma$.

In this note, one shows the following:
\theoremstyle{theorem}
\newtheorem*{thmA}{Theorem}
\begin{thmA}[Theorem \ref{Z-stable}]
Let $(X, \Gamma)$ be a free and minimal dynamical system with the (URP) and (COS). If $(X, \Gamma)$ has mean dimension zero, then $(\mathrm{C}(X)\rtimes\Gamma) \otimes \mathcal Z \cong \mathrm{C}(X) \rtimes \Gamma$, where $\mathcal Z$ is the Jiang-Su algebra. 

In particular, let $(X_1, \Gamma_1)$ and $(X_2, \Gamma_2)$ be two free minimal dynamical systems with the (URP) and (COS), and zero mean dimension, then  $$\mathrm{C}(X_1)\rtimes\Gamma_1 \cong \mathrm{C}(X_2)\rtimes\Gamma_2$$ if and only if $$\mathrm{Ell}(\mathrm{C}(X_1)\rtimes\Gamma_1) \cong \mathrm{Ell}(\mathrm{C}(X_2)\rtimes\Gamma_2),$$ where $\mathrm{Ell}(\cdot) =  (\Kzero(\cdot), \Kzero^+(\cdot), [1], \mathrm{T}(\cdot), \rho, \Kone(\cdot))$ is the Elliott invariant. Moreover, these C*-algebras are inductive limits of unital subhomogeneous C*-algebras.
\end{thmA}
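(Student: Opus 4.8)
The plan is to combine the structural consequences of (URP) and (COS) with current classification technology; throughout write $A=\mathrm{C}(X)\rtimes\Gamma$. Since $(X,\Gamma)$ is free and minimal and $\Gamma$ is countable amenable, $A$ is simple, separable, unital, nuclear and satisfies the UCT, and $\mathrm{T}(A)$ is canonically affinely homeomorphic to $\mathcal M_1(X,\Gamma)$. The result of \cite{Niu-MD-Z} recalled in the introduction already supplies half of what is needed: because $(X,\Gamma)$ has (URP), (COS) and $\mathrm{mdim}(X,\Gamma)=0$, the comparison radius of $A$ is $0$, so $A$ has strict comparison of positive elements with respect to its bounded traces. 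By \cite{CETWW-CP} the only remaining input for $\mathcal Z$-stability is that $A$ have Uniform Property Gamma.

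The core of the argument is thus to extract Uniform Property Gamma from the (URP) alone (the role that the small boundary property plays in \cite{KS-comparison}). Fix $n\in\mathbb{N}$, a finite $F\subseteq A$ and $\eps>0$; after a small perturbation each element of $F$ is a finite sum $\sum_{\gamma\in K}a_\gamma u_\gamma$ with $a_\gamma\in\mathrm{C}(X)$ and $K\subseteq\Gamma$ finite and symmetric. Feed $K$ (taken large) and a small tolerance into the (URP), obtaining closed sets $B_1,\dots,B_S$ and $(K,\delta)$-invariant sets $\Gamma_1,\dots,\Gamma_S$ with the columns $B_s\gamma$ ($\gamma\in\Gamma_s$, $1\le s\le S$) disjoint and the leftover set $R=X\setminus\bigsqcup_{s,\gamma}B_s\gamma$ of arbitrarily small orbit capacity, hence, since $\mu(R)\le\mathrm{ocap}(R)$ for every invariant $\mu$, with $\sup_{\mu\in\mathcal M_1(X,\Gamma)}\mu(R)$ arbitrarily small. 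Using an Ornstein--Weiss quasi-tiling argument, partition each $\Gamma_s=\Gamma_s^{(1)}\sqcup\cdots\sqcup\Gamma_s^{(n)}$ into $n$ pieces that are each almost $K$-invariant and of almost equal cardinality $\abs{\Gamma_s}/n$, with all estimates uniform in $s$ (this uses that (URP) lets us take the $\Gamma_s$ as invariant as we wish). Replace the closed bases by continuous bump functions and let $f_j\in\mathrm{C}(X)\subseteq A$ be supported on a small neighbourhood of $\bigsqcup_s\bigsqcup_{\gamma\in\Gamma_s^{(j)}}B_s\gamma$. Then the $f_j$ are pairwise orthogonal; $\sum_j f_j$ is within $\sup_\mu\mu(R)$ of $1$ in every trace; each $f_j$ commutes with $\mathrm{C}(X)$ and, since $\Gamma_s^{(j)}$ is almost $K$-invariant, almost commutes in trace with each $u_k$, $k\in K$; and $\tau(f_j)\approx 1/n$ uniformly over $\tau\in\mathrm{T}(A)$. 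Passing to the uniform tracial ultrapower produces honest pairwise-orthogonal positive contractions in $A^\omega\cap A'$ summing to $1$ with uniform trace $1/n$, i.e.\ $A$ has Uniform Property Gamma.

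Granted strict comparison and Uniform Property Gamma, \cite{CETWW-CP} yields $A\otimes\mathcal Z\cong A$, the first assertion. Then $A$ is a unital simple separable nuclear $\mathcal Z$-stable C*-algebra satisfying the UCT, so the classification theorem (see \cite{EGLN-ASH} and the references therein) shows it is classified by $\mathrm{Ell}(\cdot)$; applied to $\mathrm{C}(X_1)\rtimes\Gamma_1$ and $\mathrm{C}(X_2)\rtimes\Gamma_2$ this gives the stated equivalence, the implication from isomorphism of algebras to isomorphism of invariants being trivial. Finally, by the range-of-invariant results accompanying that classification (\cite{ENST-ASH}, \cite{EGLN-ASH}), every such Elliott invariant is realised by an inductive limit $B$ of unital subhomogeneous C*-algebras; classification then forces $A\cong B$, so $A$ is itself such an inductive limit.

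The step I expect to be the main obstacle is producing the partitions $\Gamma_s=\bigsqcup_j\Gamma_s^{(j)}$ that are simultaneously almost $K$-invariant, of almost equal size, and controlled uniformly in $s$, together with the verification that the associated central sequence of positive contractions has the right trace value not merely at one trace but uniformly over the possibly infinite-dimensional simplex $\mathcal M_1(X,\Gamma)$ --- for which the orbit-capacity bound on $\mu(R)$ coming from the (URP) is exactly what provides the uniform control. Everything around this step is assembly of cited results.
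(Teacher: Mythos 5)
There is a genuine gap at the heart of your argument: the step where you ``extract Uniform Property Gamma from the (URP) alone.'' Uniform Property Gamma is not the statement that there are pairwise orthogonal, approximately central positive contractions $p_1,\dots,p_n$ summing to $1$ with $\tau(p_i)\approx \tfrac1n$ uniformly; it requires $\tau(p_i a)=\tfrac1n\tau(a)$ for \emph{every} $a\in A$ and every (limit) trace $\tau$. Your tower-splitting construction only controls the case $a=1$. Concretely, with $f_i$ supported on $\bigsqcup_s\bigsqcup_{\gamma\in\Gamma_s^{(i)}}B_s\gamma$ and $a\in\mathrm{C}(X)$ chosen to be concentrated on the levels indexed by $\Gamma_s^{(1)}$, one gets $\tau(f_1a)\approx\tau(a)$ rather than $\tfrac1n\tau(a)$, no matter how the $\Gamma_s^{(i)}$ are chosen; and since the URP towers' levels $B_s\gamma$ may be topologically large (continuous functions need not be close to constant on them), one cannot repair this by re-balancing the splitting of $\Gamma_s$ alone --- one would have to subdivide the bases $B_s$ themselves in a measure-theoretically controlled way, which is exactly what the small boundary property provides and what the URP by itself does not. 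This is also visible structurally: your Gamma argument never uses $\mathrm{mdim}(X,\Gamma)=0$, so if it were correct it would yield Uniform Property Gamma for every free minimal system with the URP, including the Giol--Kerr type examples of large mean dimension; nothing like this is known, and it is precisely the role of mean dimension zero (via SBP) in Kerr--Szabo's Theorem 9.4 that your sketch bypasses. (Your centrality estimate, by contrast, is fine: almost $K$-invariance of the pieces $\Gamma_s^{(i)}$ does give smallness of $\|[f_i,u_k]\|$ in the uniform $2$-norm, and $\mu(R)\le\mathrm{ocap}(R)$ gives the uniform trace bound on the remainder.)

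For comparison, the paper's principal proof avoids Uniform Property Gamma altogether: it shows $\mathrm{W}(A)\cong \mathrm{V}(A)\sqcup\mathrm{LAff}_{\mathrm b}(\mathrm{T}(A))^{++}$ (using mean dimension zero through Proposition \ref{cont-approx}), deduces almost divisibility, builds approximately central c.p.c.\ order zero maps $\mathrm{M}_k(\Comp)\to A$ via Lemma \ref{cut-down} and Proposition \ref{tracial-Z}, and concludes tracial $\mathcal Z$-stability, hence $\mathcal Z$-stability by \cite{Matui-Sato-CP} and \cite{HO-Z}. Its alternative proof (Section \ref{SBP}) is essentially your route with the missing ingredient supplied: it first proves $\mathrm{URP}+\mathrm{mdim}\,0\Rightarrow\mathrm{SBP}$ (Theorem \ref{md0-2-sbp}) and only then invokes Kerr--Szabo (Theorem 9.4 / Corollary 9.5 of \cite{KS-comparison}) together with strict comparison from \cite{Niu-MD-Z}. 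If you want to salvage your approach, you should either prove the SBP first (as the paper does) or give a genuinely new argument producing the full trace condition $\tau(p_ia)=\tfrac1n\tau(a)$ for all $a$, uniformly over $\mathcal M_1(X,\Gamma)$; as written, that step fails. The final assembly (strict comparison plus Uniform Property Gamma via \cite{CETWW-CP}, then classification and the subhomogeneous inductive limit structure from the range-of-invariant results) is fine once that gap is filled.
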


As a consequence, the following crossed-product C*-algebras are $\mathcal Z$-stable:
\theoremstyle{theorem}
\newtheorem*{corA}{Corollary}
\begin{corA}[Corollary \ref{Z-corollary}]
Let $(X, \Gamma)$ be a free and minimal dynamical system with mean dimension zero. Assume that
\begin{itemize}
\item either $\Gamma = \Int^d$ for some $d\geq 1$, or
\item $(X, \Gamma)$ is an extension of a Cantor system and $\Gamma$ has subexponetial growth.
\end{itemize}
Then,  the C*-algebra $\mathrm{C}(X) \rtimes \Gamma$ is classified by the Elliott invariant and is an inductive limit of unital subhomogeneous C*-algebras. %$(\mathrm{C}(X)\rtimes\Gamma) \otimes \mathcal Z \cong \mathrm{C}(X) \rtimes \Gamma$. 
\end{corA}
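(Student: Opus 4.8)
The plan is to reduce the whole statement to the single claim that $A:=\mathrm{C}(X)\rtimes\Gamma$ absorbs $\mathcal Z$ tensorially. Since $\Gamma$ is a countably infinite discrete amenable group acting freely and minimally on the separable compact Hausdorff space $X$, the algebra $A$ is unital, separable, simple, nuclear, satisfies the UCT (as $\Gamma$ is amenable), and has nonempty trace simplex $\mathrm{T}(A)\cong\mathcal M_1(X,\Gamma)$ (amenability provides an invariant probability measure, so $A$ is stably finite). Granting $A\cong A\otimes\mathcal Z$, the classification theorem for unital separable simple nuclear $\mathcal Z$-stable C*-algebras satisfying the UCT (see \cite{EGLN-ASH} and the references therein) shows that $A$ is classified by $\mathrm{Ell}(A)$, which is the second assertion of the theorem; and the classifiable simple ASH models of \cite{ENST-ASH} and \cite{EGLN-ASH} exhaust the possible Elliott invariants of such algebras, so $A$ is in addition an inductive limit of unital subhomogeneous C*-algebras. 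Hence it suffices to prove that $(\mathrm{URP})$, $(\mathrm{COS})$ and $\mathrm{mdim}(X,\Gamma)=0$ imply $A\cong A\otimes\mathcal Z$.

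For this I would verify the two hypotheses of the $\mathcal Z$-stability theorem of \cite{CETWW-CP} (building on \cite{Matui-Sato-CP} and its successors): strict comparison of positive elements and Uniform Property Gamma. Strict comparison is immediate from \cite{Niu-MD-Z}: under $(\mathrm{URP})$ and $(\mathrm{COS})$ the comparison radius of $A$ is at most $\tfrac12\,\mathrm{mdim}(X,\Gamma)=0$, so $A$ has strict comparison with respect to all $2$-quasitraces, and these coincide with the bounded traces since $A$ is unital and nuclear. What remains, and what is the heart of the matter, is Uniform Property Gamma.

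To establish Uniform Property Gamma I would fix $n\in\mathbb N$ and construct, in the uniform tracial ultrapower, pairwise orthogonal projections $p_1,\dots,p_n\in A^\omega\cap A'$ with $\sum_j p_j=1$ and $\tau(p_j a)=\tfrac1n\tau(a)$ for every $a\in A$ and every $\tau$. The $(\mathrm{URP})$ supplies, for each finite $K\subseteq\Gamma$ and $\eps>0$, closed base sets $B_1,\dots,B_S$ and $(K,\eps)$-invariant return sets $\Gamma_1,\dots,\Gamma_S$ with the tower pieces $\{B_s\gamma:\gamma\in\Gamma_s\}$ disjoint and the complement of orbit capacity, hence invariant measure, less than $\eps$. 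Using the Ornstein--Weiss quasitiling machinery one splits each $\Gamma_s=\Gamma_s^{(1)}\sqcup\cdots\sqcup\Gamma_s^{(n)}$ into pieces that are again essentially $(K,\eps)$-invariant and of density $\tfrac1n|\Gamma_s|$ up to a small error, and sets $e_j\approx\sum_s\sum_{\gamma\in\Gamma_s^{(j)}}\varphi_s(\cdot\,\gamma^{-1})$ for suitable continuous approximations $\varphi_s$ of $\mathbf 1_{B_s}$. These functions lie in $\mathrm{C}(X)$ and so commute with $\mathrm{C}(X)$ exactly; their commutators with the canonical unitaries $u_\gamma$, $\gamma\in K$, are controlled in $\|\cdot\|_2$ by the $(K,\eps)$-invariance of the $\Gamma_s^{(j)}$; the defect $1-\sum_j e_j$ is supported on the tower complement and hence small in trace; and, using $\mu(B_s\gamma)=\mu(B_s)$ together with the mean ergodic theorem for the F{\o}lner pieces $\Gamma_s^{(j)}$, one obtains $\tau(e_j\,\cdot\,)\approx\tfrac1n\tau(\,\cdot\,)$. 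Passing to $A^\omega$ and correcting by functional calculus --- strict comparison being used to absorb the small defect and to upgrade approximate orthogonality to genuine orthogonality --- produces the required $p_1,\dots,p_n$. I expect two points to carry the real difficulty. First, all of these estimates must hold \emph{uniformly} over the simplex $\mathrm{T}(A)$, which can be infinite-dimensional; controlling everything uniformly in the trace is exactly the reason $(\mathrm{URP})$ and $(\mathrm{COS})$ were introduced, and the quantitative estimates there are what one has to push through. Second, the approximations $\varphi_s$ must be chosen so that the supports of the translates $\varphi_s(\cdot\,\gamma^{-1})$ are pairwise almost disjoint in measure, uniformly over the infinitely many (and possibly accumulating) tower pieces; this is precisely the boundary control that a zero-mean-dimension, or small-boundary, hypothesis provides, and is where $\mathrm{mdim}(X,\Gamma)=0$ is genuinely used. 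A cleaner but more black-box alternative for this last step is to first deduce from $(\mathrm{URP})$ and $\mathrm{mdim}(X,\Gamma)=0$ the small boundary property and then invoke \cite[Theorem~9.4]{KS-comparison} of Kerr and Szabo directly. Once strict comparison and Uniform Property Gamma are in place, \cite{CETWW-CP} yields $A\cong A\otimes\mathcal Z$, completing the argument.
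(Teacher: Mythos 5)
There is a genuine gap, and it sits exactly where the content of this corollary lies. Your reduction ``it suffices to prove that (URP), (COS) and $\mathrm{mdim}(X,\Gamma)=0$ imply $A\cong A\otimes\mathcal Z$'' silently replaces the corollary's actual hypotheses --- $\Gamma=\Int^d$, or $(X,\Gamma)$ an extension of a Cantor system with $\Gamma$ of subexponential growth --- by the standing assumptions (URP) and (COS), and you never justify that these two classes of systems have those properties. That bridging step is precisely what distinguishes the corollary from Theorem \ref{Z-stable}, and it is essentially the whole of the paper's proof: free minimal $\Int^d$-actions have the (URP) and $(\tfrac14,(2\lfloor\sqrt d\rfloor+1)^d+1)$-Cuntz comparison of open sets by \cite{Niu-MD-Zd}, and free minimal actions of subexponential-growth groups on extensions of Cantor systems have the (URP) and $(\tfrac14,1)$-comparison by \cite{Niu-MD-Z}. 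Without quoting (or proving) these facts, your argument never uses the bulleted hypotheses at all and so does not establish the statement as given.

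The remainder of your proposal re-proves Theorem \ref{Z-stable} instead of invoking it, which would be legitimate, but note two things. Your ``black-box'' route --- strict comparison from Theorem 4.8 of \cite{Niu-MD-Z} plus Uniform Property Gamma via the small boundary property and Theorem 9.4/Corollary 9.5 of \cite{KS-comparison} --- is the paper's own alternative proof in Section \ref{SBP}; however, the implication $\mathrm{URP}+\mathrm{mdim}\,0\Rightarrow\mathrm{SBP}$ is not an off-the-shelf fact for general amenable $\Gamma$: it is Theorem \ref{md0-2-sbp} of this paper and needs its own argument (only in the $\Int^d$ case can you instead quote \cite{GLT-Zk}, where $\mathrm{mdim}\,0$ is equivalent to the SBP). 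Your primary route, a direct tower-splitting construction of Uniform Property Gamma, is left as a sketch with the two genuinely hard uniformity issues acknowledged but unresolved, so it cannot stand as a proof. For comparison, the paper's first proof of Theorem \ref{Z-stable} avoids Uniform Property Gamma altogether: it shows almost divisibility of the Cuntz semigroup (Corollary \ref{md0-divisible}), builds approximately central order zero maps from matrix algebras via the tracial approximation of Lemma \ref{cut-down} and Proposition \ref{tracial-Z}, and concludes tracial $\mathcal Z$-stability, hence $\mathcal Z$-stability by \cite{Matui-Sato-CP} and \cite{HO-Z}. If you simply cite Theorem \ref{Z-stable} as established, then the only missing ingredient is the (URP)/(COS) verification above; if you insist on reproving it, the SBP step needs an actual proof.
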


Two approaches are included in this note: The first approach is more self-contained and more C*-algebra oriented. It is to show that the C*-algebra $\mathrm{C}(X)\rtimes\Gamma$ being considered is tracially $\mathcal Z$-stable; since $\mathrm{C}(X)\rtimes\Gamma$ is nuclear, it follows from \cite{Matui-Sato-CP} and \cite{HO-Z} that $\mathrm{C}(X)\rtimes\Gamma$ actually is $\mathcal Z$-stable.

In the second approach (Section \ref{SBP}), one proves the following dynamical system statement: $$\mathrm{mdim}0 + \mathrm{URP} \Rightarrow \mathrm{SBP}.$$  If, in addition, the system is assumed to have the (COS), it follows from \cite{Niu-MD-Z} that the C*-algebra $\mathrm{C}(X)\rtimes\Gamma$ has strict comparison of positive elements. Hence, with the SBP, the $\mathcal Z$-stability of $\mathrm{C}(X)\rtimes\Gamma$ also follows from the Theorem 9.4 and Corollary 9.5 of \cite{KS-comparison}. 

%\section{The $\mathcal Z$-stability of $\mathrm{C}(X)\rtimes\Gamma$}

\section{Notation and preliminaries}

\subsection{Topological Dynamical Systems}

\begin{defn}
A topological dynamical system $(X, \Gamma)$ consists of a separable compact Hausdorff space $X$, a discrete group $\Gamma$, and a homomorphism $\Gamma \to \mathrm{Homeo}(X)$, where $\mathrm{Homeo}(X)$ is the group of homeomorphisms of $X$, acting on $X$ from the right. In this paper, we frequently omit the word topological, and just refer it as a dynamical system. %If it is clear from the context, one also omits $\sigma$, and denote the dynamical system by $(X, \Gamma)$.

The dynamical system $(X, \Gamma)$ is said to be free if $x\gamma=x$ implies $\gamma=e$, where $x\in X$ and $\gamma\in\Gamma$.

A closed set $Y \subseteq X$ is said to be invariant if $$Y\gamma = Y,\quad \gamma \in\Gamma,$$ and the dynamical system $(X, \Gamma)$ is said to be minimal if $\varnothing$ and $X$ are the only invariant closed subsets.

\end{defn}

\begin{defn}
A Borel measure $\mu$ on $X$ is invariant if for any Borel set $E\subseteq X$, one has $$\mu(E) = \mu(E\gamma),\quad \gamma\in\Gamma.$$ Denote by $\mathcal M_1(X, \Gamma)$ the set of all invariant Borel probability measures on $X$. It is a Choquet simplex under the weak* topology.
\end{defn}

%Recall that
\begin{defn}\label{defn-amenable}
Let $\Gamma$ be a (countable) discrete group. Let $K\subseteq\Gamma$ be a finite set and let $\delta>0$. Then a finite set $F \subseteq \Gamma$ is said to be $(K, \eps)$-invariant if $$\frac{\abs{FK\Delta F}}{\abs{F}} < \eps.$$

The group $\Gamma$ is amenable if there is a sequence  $(\Gamma_n)$ of finite subsets of $\Gamma$ such that for any $(K, \eps)$, the set $\Gamma_n$ is $(K, \eps)$-invariant if $n$ is sufficiently large. The sequence $(\Gamma_n)$ is called a F{\o}lner sequence.

The $K$-interior of a finite set $F\subseteq\Gamma$ is defined as $$\mathrm{int}_K(F) = \{\gamma\in F: \gamma K \subseteq F\}.$$ Note that $$\abs{F\setminus\mathrm{int}_K(F)} \leq\abs{K}\abs{FK\setminus F}\leq \abs{K}\abs{FK\Delta F} ,$$ and hence for any $\eps>0$, if $F$ is $(K, \frac{\eps}{\abs{K}})$-invariant, then $$\frac{\abs{F\setminus\mathrm{int}_K(F)}}{\abs{F}} < \eps.$$

\end{defn}

\begin{defn}[see \cite{Lindenstrauss-Weiss-MD}]
Consider a topological dynamical system $(X, \Gamma)$, where $\Gamma$ is amenable, and let $E\subseteq X$. The orbit capacity of $E$ is defined by
$$\mathrm{ocap}(E):=\lim_{n\to\infty}\frac{1}{\abs{\Gamma_n}}\sup_{x\in X}\sum_{\gamma\in \Gamma_n} \chi_E(x\gamma),$$
where $(\Gamma_n)$ is a F{\o}lner sequence, and $\chi_E$ is the characteristic function of $E$.
The limit always exists and is independent from the choice of the F{\o}lner sequence $(\Gamma_n)$.
\end{defn}

\begin{defn}[see \cite{Gromov-MD} and \cite{Lindenstrauss-Weiss-MD}]
Let $\mathcal U$ be an open cover of $X$. Define
$$D(\mathcal U)=\min\{\mathrm{ord}(\mathcal V): \textrm{$\mathcal V$ is an open cover of $X$ and $\mathcal V \preceq U$}\},$$ where $$\mathrm{ord}(\mathcal V)=-1 + \sup_{x\in X}\sum_{V\in\mathcal V} \chi_V(x),$$ and $\mathcal V \preceq \mathcal U$ means that, for any $V\in\mathcal V$, there is $U\in\mathcal U$ with $V\subseteq U$.

Consider a topological dynamical system $(X, \Gamma)$, where $\Gamma$ is a discrete amenable group. The mean topological dimension is defined by
$$\mathrm{mdim}(X, \Gamma):=\sup_{\mathcal U}\lim_{n\to\infty}\frac{1}{\abs{\Gamma_n}}D(\bigvee_{\gamma\in\Gamma_n} \gamma^{-1}(\mathcal U)),$$
where $\mathcal U$ runs over all finite open covers of $X$, $(\Gamma_n)$ is a F{\o}lner sequence (the limit is independent from the choice of $(\Gamma_n)$), and $\alpha \vee \beta$ denotes the open cover $$\{U \cap V: U\in \alpha,\ V\in\beta\}$$ for any open covers $\alpha$ and $\beta$.

%Note that in the case $\Gamma = \Int$, one has
%$$\mathrm{mdim}(X, \sigma)=\sup_{\mathcal U}\lim_{n\to\infty}\frac{1}{n}D(\mathcal U \vee \sigma^{-1}(\mathcal U) \vee \cdots\vee \sigma^{-n+1}(\mathcal U)).$$

\end{defn}

%\begin{rem}
%It is shown in \cite{Dou-MD} that, if $\Gamma$ is a countable infinite amenable group, then each number in $[0, +\infty]$ can be realized as the mean dimension  of a minimal dynamical system $(X, \Gamma)$.
%\end{rem}

\subsection{Crossed product C*-algebras}
Consider a topological dynamical system $(X, \Gamma)$. %Let us still use $\sigma$ to denote the action of $\Gamma$ on $\mathrm{C}(X)$ by 
%$$\sigma(\gamma)(f) = f\circ \gamma^{-1}.$$
The (full) crossed product C*-algebra $A=\mathrm{C}(X)\rtimes\Gamma$ is defined to be the universal C*-algebra
$$\textrm{C*}\{f, u_\gamma;\ u_\gamma fu_\gamma^*= f(\cdot \gamma )=f\circ \gamma,\ u_{\gamma_1}u^*_{\gamma_2} = u_{\gamma_1\gamma_2^{-1}},\ u_e=1,\ f\in \mathrm{C}(X),\ \gamma, \gamma_1, \gamma_2 \in \Gamma\}.$$
The C*-algebra $A$ is nuclear (Corollary 7.18 of \cite{Williams}) if $\Gamma$ is amenable. If, moreover, $(X, \Gamma)$ is minimal and topologically free, the C*-algebra $A$ is simple (Theorem 5.16 of \cite{Effros-Hahn} and  Th\'{e}or\`{e}me 5.15 of \cite{ZM-prod}), i.e., $A$ has no non-trivial two-sided ideals.  %The simplex of tracial states of $\mathrm{C}(X) \rtimes_\sigma \Gamma$ is canonically homeomorphic to the simplex  $\mathcal M_1(X, \sigma, \Gamma)$. 

\section{The Cuntz semigroup of $\mathrm{C}(X)\rtimes\Gamma$}

\begin{defn}
Let $A$ be a C*-algebra, and let $a, b\in A^+$. The element $a$ is said to be Cuntz sub-equivalent to $b$, denoted by $a \precsim b$, if there are $x_i$, $y_i$, $i=1, 2, ...$, such that $$\lim_{i\to\infty} x_iby_i = a,$$ and we say that $a$ is Cuntz equivalent to $b$, denoted by $a\sim b$, if $a\precsim b$ and $b \precsim a$. Then the Cuntz semigroup of $A$, denoted by $\mathrm{W}(A)$, is defined as
$$ (\mathrm{M}_\infty(A))^+/ \sim$$ with the addition
$$[a] + [b] = \left[\left(\begin{array}{cc} a &  \\ & b\end{array}\right)\right],$$ where $(\mathrm{M}_\infty(A))^+:=\bigcup_{n=1}^\infty\mathrm{M}^+_n(A)$ and  $[\cdot]$ denotes the equivalence class.

\end{defn}

\begin{defn}\label{rank-fn}
Let $A$ be a C*-algebra, let $\mathrm{T}(A)$ denote the set of all tracial states of $A$, equipped with the topology of pointwise convergence. Note that if $A$ is unital, the set $\mathrm T(A)$ is a Choquet simplex.

Let $a$ be a positive element of $\mathrm{M}_\infty(A)$ and $\tau \in \mathrm{T}(A)$; define
$$\mathrm{d}_\tau(a) = \lim_{n\to\infty}\tau(a^{\frac{1}{n}}),$$ where $\tau$ is extended naturally to $\mathrm{M}_\infty(A)$. 
The function $$\mathrm{T}(A) \ni \tau \mapsto \mathrm{d}_\tau(a) \in \Real^+$$ is the limit of an increasing sequence of strictly positive affine functions on $\mathrm{T}(A)$, so it is lower semicontinuous. 

It is well known that if $a \precsim b$, then
$$\mathrm{d}_\tau(a) \leq \mathrm{d}_\tau(b),\quad \tau\in\mathrm{T}(A).$$
If the C*-algebra $A$ satisfies the property that for any positive elements $a, b\in\mathrm{M}_\infty(A)$ with $$\mathrm{d}_\tau(a) < \mathrm{d}_\tau(b),\quad \tau\in\mathrm{T}(A),$$ then $a \precsim b$, the C*-algebra $A$ is said to have the strict comparison of positive elements.
\end{defn}

\begin{rem}
Note that if $A = \mathrm{M}_n(\mathrm{C}_0(X))$, where $X$ is a locally compact Hausdorff space, and $\tau$ be a trace of $A$. Then, for any positive element $a\in \mathrm{M}_\infty(A) \cong \mathrm{M}_\infty(\mathrm{C}_0(X))$ and any $\tau\in\mathrm T(A)$, one has $$\tau(a) =\int_{X} \frac{1}{n}\mathrm{Tr}(a(x))d\mu_\tau \quad\mathrm{and}\quad \mathrm{d}_\tau(a) =\int_{X} \frac{1}{n}\mathrm{rank}(a(x))d\mu_\tau, $$ where $\mu_\tau$ is the Borel measure on $X$ induced by $\tau$.

\end{rem}

\begin{defn}
Let $A$ be a unital C*-algebra. Denote by $\mathrm{LAff}_{\mathrm b}(\mathrm{T}(A))^{++}$ the cone of all strictly positive lower semicontinuous affine functions on $\mathrm{T}(A)$, and denote by $\mathrm{V}(A)$ the semigroup of Murray-von Neumann equivalence classes of projections of $\bigcup_{n=1}^\infty\mathrm{M}_n(A)$. Then $\mathrm{V}(A) \sqcup \mathrm{LAff}_{\mathrm b}(\mathrm{T}(A))^{++}$ form an ordered  semigroup with addition of crossed terms to be $$(p + f) = p(\tau) + f(\tau) \in \mathrm{LAff}_{\mathrm b}(\mathrm{T}(A))^{++},\quad p\in\mathrm{V}(A),\ f\in \mathrm{LAff}_{\mathrm b}(\mathrm{T}(A))^{++}.$$ Then the map
 $$
 \mathrm{W}(A) \ni [a] \mapsto 
 \left\{ 
 \begin{array}{ll} 
 [a] \in \mathrm{V}(A), & \textrm{if $a$ is equivalent to a projection,} \\
 (\tau \mapsto \mathrm{d}_\tau([a])) \in \mathrm{LAff}_{\mathrm b}(\mathrm{T}(A))^{++},& \textrm{otherwise,}  \end{array}
\right.
$$ 
is a representation of the Cuntz semigroup $\mathrm{W}(A)$.

\end{defn}

%\begin{lem}
%Let $A$ be a unital C*-algebra. There is an isomorphism
%$$A^{\mathrm{s.a.}}/\overline{[A, A]} \to \mathrm{Aff}(\mathrm{T}_1(A)).$$ In particular, any affine function on $\mathrm{T}_1(A)$ is induced by a self-adjoint element of $A$.
%\end{lem}
%
%\begin{proof}
%Consider the space $$(A^{\mathrm{s.a.}}/\overline{[A, A]}, A^+/\overline{[A, A]}, [1]).$$ It is an order-unit Banach space, and
%$$(A^{\mathrm{s.a.}}/\overline{[A, A]}, A^+/\overline{[A, A]}, [1])' \cong \mathrm{T}^+(A)$$ under the weak topology. It then follow that
%$$ A^{\mathrm{s.a.}}/\overline{[A, A]} \cong \mathrm{Aff}(\mathrm{T}_1(A)),$$ as desired.
%\end{proof}

The following is a version of Theorem 3.4 of \cite{Toms-SDG} for the C*-algebra $\mathrm{C}(X)\rtimes\Gamma$.

\begin{prop}\label{cont-approx}
Let $A = \mathrm{C}(X)\rtimes\Gamma$, where $(X, \Gamma)$ is free, minimal,  has the (URP) and zero mean dimension. %Also assume that $\mathrm{C}(X)\rtimes\Gamma$ has Cuntz comparison on open sets. 
Then, for any continuous affine function $\alpha: \mathrm{T}(A) \to (0, \infty)$ and any $\eps>0$, there is a positive element $a\in \mathrm{M}_\infty(A)$ such that 
$$\abs{\alpha(\tau) - \mathrm{d}_\tau(a)} <\eps,\quad \tau\in\mathrm{T}(A).$$
\end{prop}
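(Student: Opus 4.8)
The plan is to mirror the strategy of Theorem 3.4 of \cite{Toms-SDG}: realize the target affine function $\alpha$ as (approximately) the rank function $\mathrm{d}_\tau$ of a positive element built out of the Rokhlin tower structure supplied by the (URP). First I would reduce to the case where $\alpha$ takes values in a bounded interval and where it suffices to approximate $\alpha$ uniformly on $\mathrm{T}(A)$ up to $\eps$. Since $\mathrm{T}(\mathrm{C}(X)\rtimes\Gamma) \cong \mathcal M_1(X, \Gamma)$ canonically (a trace restricts to an invariant measure and conversely via the canonical conditional expectation), a continuous affine function on $\mathrm{T}(A)$ is the same datum as a continuous affine function on $\mathcal M_1(X, \Gamma)$, i.e., a weak*-continuous function $\mu \mapsto \alpha(\mu)$. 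By the Stone--Weierstrass / Krein--Milman type density of evaluation functions, I would first approximate $\alpha$ uniformly on $\mathcal M_1(X, \Gamma)$ by a function of the form $\mu \mapsto \mu(g)$ for a single real-valued $g \in \mathrm{C}(X)$ with, say, $0 < g < M$; this is the standard fact that such ``evaluation'' affine functions are uniformly dense among continuous affine functions on $\mathcal M_1(X,\Gamma)$ (Toms uses exactly this). So it is enough to find $a \in \mathrm{M}_\infty(A)^+$ with $\mathrm{d}_\tau(a) \approx \int_X g \, d\mu_\tau$ for all $\tau$.

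Next I would use the (URP) together with $\mathrm{mdim}(X,\Gamma) = 0$ to produce, for a suitable tolerance, closed sets $B_1, \dots, B_S$ and $(K,\eps')$-invariant F\o lner sets $\Gamma_1, \dots, \Gamma_S$ with the towers $B_s\gamma$ ($\gamma \in \Gamma_s$) disjoint and the remainder of small orbit capacity. On each tower I want to place a positive element whose pointwise rank, integrated against any invariant measure, recovers $\int g$. Concretely, over each base $B_s$ I would choose a positive function (or matrix-valued positive element over $\mathrm{C}(B_s)$) whose rank at a point $x \in B_s$ equals roughly $\abs{\Gamma_s} \cdot \big(\text{average of } g \text{ over the tower through } x\big)$, rounded to an integer; the zero-mean-dimension hypothesis is what allows such a rank profile to be realized genuinely inside $\mathrm{M}_N(\mathrm{C}(B_s))$ for $N$ large without topological obstruction (rank functions up to $\eps$ perturbation are unobstructed when the relevant covering dimension growth is sublinear — this is precisely how $\mathrm{mdim}=0$ enters, exactly as in the $\mathbb Z$-case of \cite{EN-MD0}). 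Summing the push-forwards of these elements across the tower translates $B_s \gamma$, and using the conditional expectation / the computation of $\mathrm{d}_\tau$ on such ``diagonal'' elements (trace = integral of normalized rank over $X$, by the Remark after Definition \ref{rank-fn} applied fibrewise and then averaged by ergodic-type decomposition over $\mathcal M_1(X,\Gamma)$), I get $\mathrm{d}_\tau(a) = \sum_s \abs{\Gamma_s}\, \mu_\tau(B_s) \cdot (\text{tower-average of } g) + (\text{error})$. By invariance of $\mu_\tau$, $\abs{\Gamma_s}\mu_\tau(B_s)$ equals $\mu_\tau$ of the tower $\bigsqcup_{\gamma\in\Gamma_s}B_s\gamma$, and the tower-averages of $g$ reassemble, up to the small orbit-capacity remainder and the $(K,\eps')$-boundary terms, into $\int_X g\, d\mu_\tau$. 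Choosing the parameters $\eps', K$ and the tower data fine enough makes the total error less than $\eps$ uniformly in $\tau$.

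The main obstacle I expect is controlling the \emph{integer-rounding} and \emph{boundary} errors uniformly over \emph{all} of $\mathrm{T}(A)$ simultaneously, rather than for a fixed trace — the towers $\Gamma_s$ have sizes $\abs{\Gamma_s}$ that may be large, and rounding a rank of order $\abs{\Gamma_s} \cdot (\text{average of }g)$ to an integer introduces a relative error of order $1/\abs{\Gamma_s}$ per tower, which is fine, but one must ensure the \emph{measures} of the bases $B_s$ are not so concentrated that accumulated errors blow up for some extremal invariant measure; this is handled by the uniformity built into the definition of orbit capacity (which is a supremum over $x \in X$, hence controls $\sup_{\mu} \mu$ of the remainder) and by the $(K,\eps')$-invariance controlling the overlap between translated towers. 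A secondary technical point is passing between ``positive function with prescribed rank over the closed base $B_s$'' and an actual element of $\mathrm{M}_\infty(\mathrm{C}(X)\rtimes\Gamma)$: one embeds $\mathrm{M}_{\abs{\Gamma_s}}(\mathrm{C}(B_s))$ into a corner of $\mathrm{C}(X)\rtimes\Gamma$ via the Rokhlin tower in the usual way (the partial isometries $u_\gamma$ restricted to the tower give matrix units), and then $\mathrm{d}_\tau$ of the resulting element is computed via the canonical trace formula. Once these bookkeeping estimates are in place, assembling $a$ and verifying $\abs{\alpha(\tau) - \mathrm{d}_\tau(a)} < \eps$ is routine.
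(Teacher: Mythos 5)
Your overall strategy (realize the affine function as an integral of a function/element, then prescribe a rank profile over Rokhlin towers and integrate) is in the spirit of the paper's proof, but there is a genuine gap at the crucial step where you invoke zero mean dimension. You claim that $\mathrm{mdim}(X,\Gamma)=0$ lets you realize the desired rank profile inside $\mathrm{M}_N(\mathrm{C}(B_s))$ over the URP tower bases ``without topological obstruction.'' Mean dimension zero gives no bound whatsoever on the covering dimension of the bases $B_s$ produced by the (URP) --- they can be infinite dimensional --- and the rank-interpolation tool you implicitly need (Proposition 2.9 of Toms' paper, or any of its variants) requires the gap between the lower and upper rank bounds to dominate the dimension of the underlying space; with only an integer-rounding or $\eps\abs{\Gamma_s}$-sized gap, you need $\dim(\text{base}) \lesssim \eps\abs{\Gamma_s}$. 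Converting $\mathrm{mdim}=0$ plus (URP) into tower-like homogeneous subalgebras whose base spaces have dimension small relative to the matrix size is precisely the hard content here, and it is what the paper imports wholesale as Theorem 3.8 of \cite{Niu-MD-Z}: there the algebra $C\cong\bigoplus_s \mathrm{M}_{n_s}(\mathrm{C}_0(Z_s))$ has $\dim([Z_s])/n_s$ small because the $Z_s$ are built from open covers witnessing small mean dimension, not from the original bases $B_s$. Your sketch treats this as routine bookkeeping, but without it the construction of the element $a$ does not get off the ground.

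Two secondary points. First, the paper does not pass to a function $g\in\mathrm{C}(X)$ at all: it uses Corollary 3.10 of \cite{BPT-Cuntz} to realize $\alpha$ exactly as $\tau\mapsto\tau(a')$ for a positive $a'\in A$, cuts $a'$ into the tracially large subalgebra $C$ by the approximately central element $h$, proves uniform bounds $\delta<\tau(\pi(a_1'))<M$ by a compactness argument on approximate traces, and only then applies the rank-interpolation result fibrewise over $[Z_s]$. Your reduction to $\mu\mapsto\mu(g)$ via $\mathrm{T}(A)\cong\mathcal M_1(X,\Gamma)$ is plausible, but your assertion that $g$ can be taken strictly positive pointwise does not follow from $\mu(g)>0$ for all invariant $\mu$; you would need an additional uniform ergodic-average argument (sufficiently invariant F{\o}lner averages of $g$ are uniformly bounded below) to keep the prescribed ranks nonnegative, a point your sketch does not address. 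Second, the uniform control over all traces of the part of $X$ outside the towers via orbit capacity, and the embedding of matrix algebras over the towers into the crossed product, are handled correctly in your outline and match the paper's use of these devices.
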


\begin{proof}
By Corollary 3.10 of \cite{BPT-Cuntz}, there is a positive element $a'\in A$ such that $$\alpha(\tau) = \tau(a'),\quad \tau\in\mathrm{T}(A).$$ Since the action is minimal, the algebra $A$ is simple, and hence there is $\delta\in (0, 1)$ such that 
\begin{equation}\label{lbd-trace}
\tau(a')>\delta,\quad \tau\in \mathrm{T}(A).
\end{equation}
Also pick $M>\norm{a'}$ so $$\tau(a') < M,\quad \tau\in\mathrm{T}(A).$$

Let $\eps\in (0, \frac{1}{4})$ be arbitrary. By Theorem 3.8 of \cite{Niu-MD-Z}, for any finite subset $\mathcal F\subseteq A$ and any $\eps'\in (0, \eps)$ ($\mathcal F$ and $\eps'$ will be fixed in the next paragraph), there exist $a''\in A$, a finite set $\mathcal F'\subseteq A$,  $h\in \mathrm{C}(X)^+$, and a sub-C*-algebra $C \subseteq A$ with $C\cong\bigoplus_{s=1}^S\mathrm{M}_{n_s}(\mathrm{C}_0(Z_s))$ and closed sets $[Z_s] \subseteq Z_s$  such that
\begin{enumerate}
\item for any $f\in\mathcal F$, there is $f'\in \mathcal F'$ such that $\norm{f - f'} < \eps'$,
\item $\norm{a'-a''}< \eps'$, $\norm{ha''-a''h} < \eps'$, $\norm{hf' - f'h} < \eps'$, $f'\in\mathcal F'$,
\item $h\in C$, $ha''h\in C$, $hf'h\in C$, $f'\in\mathcal F'$,
\item $\norm{h} \leq 1$, $\tau(1-h) < \eps'$, $\tau\in\mathrm{T}_1(A)$, 
\item $\mu(X\setminus h^{-1}(1))<\frac{\eps'}{M+1}$, $\mu\in\mathcal M_1(X, \Gamma)$,
\item\label{lbd-cut-tr} under the isomorphism $C\cong \bigoplus_{s=1}^S\mathrm{M}_{n_s}(\mathrm{C}_0(Z_s))$, the element $h$ has the form $$h = \bigoplus_{s=1}^S\mathrm{diag}\{h_{s, 1}, ..., h_{s, n_s}\},$$ where $h_{s, i}: Z_s \to [0, 1]$, and
$$\frac{1}{n_s}\abs{\{1\leq i\leq n_s: h_{s, i}(x) = 1 \}} >1-\eps', \quad x\in [Z_s],\ s=1, ..., S,$$ 
%n particular, one has $$\mathrm{Tr}(h(x)) > n_s(1-\eps'),\quad x\in [Z_s],\ s=1, ..., S,$$ and
%under the isomorphism  $C\cong\bigoplus_{s=1}^S\mathrm{M}_{n_s}(\mathrm{C}_0(Z_s))$,  one has $$\frac{1}{n_s}\mathrm{Tr}(h(x)) \geq 1-\eps',\quad x\in [Z_s],$$
\item $$\frac{\mathrm{dim}([Z_s])}{n_s} < \eps'\delta,\quad s=1, 2,..., S,$$
\item\label{n-large} each $n_s$ is sufficiently large such that the interval $(2n_s\delta\eps+1, 4n_s \eps-1)$ contains at least one integer.
\end{enumerate}

Put $$a'_0=(1-h)^{\frac{1}{2}} a'' (1-h)^{\frac{1}{2}}\quad\mathrm{and}\quad a'_1=h^{\frac{1}{2}} a'' h^{\frac{1}{2}}.$$ One asserts that with $\mathcal F$ sufficiently large and $\eps'$ sufficiently small, one has
\begin{equation}\label{lbd-cut}
M> \tau(\pi(a_1')) > \delta,\quad \tau\in\mathrm{T}(\pi(C)),
\end{equation}
where $\pi$ is the standard quotient map from $C\cong\bigoplus_{s=1}^S\mathrm{M}_n(\mathrm{C}_0(Z_s))$ to $\bigoplus_{s=1}^S\mathrm{M}_n(\mathrm{C}_0([Z_s]))$.
Then, fix this pair of $(\mathcal F, \eps')$.

Indeed, suppose the contrary, there then exist a sequence of finite subset $\mathcal F'_i\subseteq A$ with dense union and a sequence of positive numbers $\eps_i$ decreasing to $0$, sub-C*-algebras $C_i\subseteq A$, $a''_i\in A$, positive elements $h_i\in C_i$ such that
\begin{itemize}
\item $\norm{a'-a''_i} < \eps_i$,
\item $\norm{h^{\frac{1}{4}}_if' -f' h_i^{\frac{1}{4}}} <\eps_i$, $f'\in\mathcal F'_i$,
\item $h_ia_i''h_i \in C_i$, $h_i\in C_i$, and $h_if'h_i\in C_i$, $f'\in \mathcal F'_i$, so that  $$h_i^{\frac{1}{2}} a''_i h_i^{\frac{1}{2}} \in C_i,\quad h_i^{\frac{1}{4}} a''_i h_i^{\frac{1}{4}} \in C_i,\quad h_i^{\frac{1}{2}} f' h_i^{\frac{1}{2}} \in C_i,\quad\mathrm{and}\quad h_i^{\frac{1}{4}} f' h_i^{\frac{1}{4}} \in C_i,\quad f'\in\mathcal F'_i,$$
\item there exists $\tau_i\in\mathrm{T}(\pi(C_i))$ such that $$ \tau_i(\pi_i(h^{\frac{1}{2}} a''_i h_i^{\frac{1}{2}}))\leq\delta\quad\mathrm{or}\quad \tau_i(\pi_i(h^{\frac{1}{2}} a''_i h_i^{\frac{1}{2}}))\geq M,$$ where $\pi_i$ is the standard quotient map from $C_i\cong\bigoplus_{s=1}^S\mathrm{M}_n(\mathrm{C}_0(Z_s))$ to $\bigoplus_{s=1}^S\mathrm{M}_n(\mathrm{C}_0([Z_s]))$,
\item $\tau(\pi(h_i)) > 1-\eps_i$, for any $\tau\in\mathrm{T}(\pi(C_i))$ (this follows from \ref{lbd-cut-tr}).
\end{itemize}

Consider the linear functional
$$\rho_i: A \ni a \mapsto \tau_i(\pi(h_i^{\frac{1}{2}} a h_i^{\frac{1}{2}})) \in \Comp,$$ and note that 
$$\norm{\rho_i} = \rho_i(1_A) = \tau_i(\pi(h_i)) > 1- \eps_i .$$
Also note that for, any $a, b\in\mathcal F'_i$, 
\begin{eqnarray*}
\rho_i(ab) & = & \tau_i(\pi(h_i^{\frac{1}{2}} ab h_i^{\frac{1}{2}})) \approx_{2\eps_i} \tau_i(\pi(h_i^{\frac{1}{4}} a h_i^{\frac{1}{4}} h_i^{\frac{1}{4}} b h_i^{\frac{1}{4}})) =  \tau_i(\pi(h_i^{\frac{1}{4}} b h_i^{\frac{1}{4}} h_i^{\frac{1}{4}} a h_i^{\frac{1}{4}})) \\ 
&\approx_{2\eps_i}&  \tau_i(\pi(h_i^{\frac{1}{2}} ba h_i^{\frac{1}{2}})) = \rho_i(ba).
\end{eqnarray*}
Thus, any accumulation point of $\{\rho_i\}$, say $\rho_\infty$, is actually a tracial state. However, $$\rho_\infty(a') = \lim_{i\to\infty} \tau_i(\pi_i(h^{\frac{1}{2}} a''_i h_i^{\frac{1}{2}})) \leq \delta\quad\mathrm{or}\quad \rho_\infty(a') = \lim_{i\to\infty} \tau_i(\pi_i(h^{\frac{1}{2}} a''_i h_i^{\frac{1}{2}})) \geq M,$$ which contradicts to \eqref{lbd-trace}. This proves the assertion.

Denote by $Z$ the (abstract) disjoint union of $Z_s$, $s=1, ..., S$, and denote by $[Z]$ the (abstract) disjoint union of $[Z_s]$, $s=1, ..., S$.
Consider $\pi(a_1') \in \pi(C)$, and consider the continuous function $$[Z] \ni x\mapsto \mathrm{Tr}(\pi(a_1')(x)) \in (0, +\infty).$$

For each $s=1, 2, ..., S$, by \ref{n-large}, one picks an integer $$\Delta_s \in (2n_s\delta\eps+1, 4n_s \eps-1).$$
Define
$$f: [Z] \ni x \mapsto \lceil \mathrm{Tr}(\pi(a_1')(x))\rceil + \Delta_s,\quad \mathrm{if}\ x\in [Z_s] $$ and
$$g: [Z] \ni x \mapsto \lfloor \mathrm{Tr}(\pi(a_1')(x))\rfloor - \Delta_s, \quad \mathrm{if}\ x\in [Z_s] $$
where $\lfloor t \rfloor = \max\{k\in\Int: k\leq t\}$ and  $\lceil t \rceil=\min\{k\in\Int: k\geq t\}$.
Note that by \eqref{lbd-cut}, for any $x\in [Z_s]$, $s=1, ..., S$, one has
$$\lfloor \mathrm{Tr}(\pi(a_1')(x))\rfloor - \Delta_s \geq n_s\mathrm{tr}(\pi(a_1')(x)) -2n_s\delta\eps-1 > n_s\delta -2n_s\delta\eps-1 >0.$$ That is, the function $g$ is a positive. Also note that for any $x\in [Z_s]$, $s=1, ..., S$, 
\begin{eqnarray*}
f(x)&\leq  & \max\{ \lceil\mathrm{Tr}(\pi(a_1')(y))\rceil + \Delta_s: y\in[Z_s] \} \\
&\leq & \max\{\mathrm{Tr}(\pi(a_1')(y)) + 4 n_s\eps: y\in[Z_s] \} \\
&\leq & n_s\max\{\mathrm{tr}(\pi(a_1')(y)) + 4\eps: y\in[Z_s] \}\\
&\leq & n_s(M+1).
\end{eqnarray*}
Therefore $f$ and $g$ satisfy
\begin{enumerate}
\item[(a)] $g$ is positive upper semicontinuous and $f$ is lower semicontinuous,
\item[(b)] $0< g(x) < \mathrm{Tr}(\pi(a_1')(x))< f(x) \leq n_s(M+1)$, $x\in [Z_s]$, and
\item[(c)] $4\mathrm{dim}([Z_s]) < 4n_s \delta\eps < 2\Delta_s-2 < f(x) - g(x) \leq 2\Delta_s+2 < 8 \eps n_s$, $x\in [Z_s]$.
\end{enumerate}

It then follows from Proposition 2.9 of \cite{Toms-SDG} that there is a positive element $a''' \in \mathrm{M}_\infty(\pi(C))$ such that 
$$g(x) < \mathrm{rank}(a'''(x))< f(x),\quad x\in [Z_s].$$

Extend $a'''$ to an element of $\mathrm{M}_n(C)\subseteq \mathrm{M}_n(A)$ and denote it by $a$. One then has that for any $x\in[Z]$, 
\begin{eqnarray}
&&\abs{\frac{1}{n(x)}\mathrm{rank}(a(x)) - \mathrm{tr}(a_1'(x))} \label{appro-rank} \\
& = & \abs{\frac{1}{n(x)}\mathrm{rank}(a'''(x)) - \mathrm{tr}(a_1'(x))} \nonumber \\
& = & \abs{\frac{1}{n(x)}\mathrm{Tr}(a_1'(x)) - \mathrm{tr}(a_1'(x))} + \frac{1}{n(x)}(f(x) - g(x)) \nonumber \\
& \leq & 8\eps. \nonumber 
\end{eqnarray}
Note that the element $a$ can be chosen so that for any $x\in Z_s\setminus[Z_s]$, $s=1, ..., S$, 
\begin{eqnarray*}
\mathrm{rank}(a(x)) & \leq & \max\{f(x): x\in[Z_s]\}\leq n_s(M+1).
\end{eqnarray*}

Now, let $\tau\in\mathrm{T}(A)$ be arbitray, and let $\mu_\tau$ denote the Borel measure on $Z$ induced by the restriction of $\tau$ to $C$. Note that $1-\eps < \norm{\mu_\tau} \leq 1$ (since $\tau(h)\geq 1-\eps'>1-\eps$), and also note that
$$\mu_\tau(Z\setminus[Z]) \leq \mathrm{d}_\tau(\tilde{c} - h) \leq \mathrm{d}_\tau(1_A - h) < \mu(X\setminus h^{-1}(1)) <\frac{\eps}{M+1},$$
where $\tilde{c} \geq h$ is some strict positive element of $C\subseteq A$, and $\mu$ is the invariant measure on $X$ corresponding to $\tau$ ($\mu$ is not $\mu_\tau$). Therefore, 
$$\int_{Z\setminus[Z]} \frac{1}{n(x)} \mathrm{rank}(a(x)) d\mu_\tau \leq \int_{Z\setminus[Z]} (M+1) d\mu_\tau< \eps,$$
and (by \ref{lbd-cut})
$$\int_{Z\setminus[Z]} \mathrm{tr}(a_1'(x)) d\mu_\tau \leq \int_{Z\setminus[Z]} \norm{a'_1} d\mu_\tau \leq \int_{Z\setminus[Z]} \norm{a_1} d\mu_\tau \leq   \int_{Z\setminus[Z]} M d\mu_\tau < \eps,$$ where $n(x) = n_s$ if $x\in Z_s$.
In particular
$$\abs{\int_{Z\setminus[Z]} \frac{1}{n(x)} \mathrm{rank}(a(x)) d\mu_\tau - \int_{Z\setminus[Z]} \mathrm{tr}(a_1'(x)) d\mu_\tau} < 2\eps.$$
By \eqref{appro-rank},
\begin{eqnarray*}
\mathrm{d}_\tau(a) & = & \int_Z \frac{1}{n(x)} \mathrm{rank}(a(x)) d\mu_\tau \\ 
& = &  \int_{[Z]} \frac{1}{n(x)} \mathrm{rank}(a(x)) d\mu_\tau +  \int_{Z\setminus[Z]} \frac{1}{n(x)} \mathrm{rank}(a(x)) d\mu_\tau \\
&\approx_{2\eps} & \int_{[Z]} \frac{1}{n(x)} \mathrm{rank}(a(x)) d\mu_\tau +  \int_{Z\setminus[Z]} \mathrm{tr}(a_1'(x)) d\mu_\tau \\
&\approx_{8\eps} & \int_{[Z]} \mathrm{tr}(a_1'(x)) d\mu_\tau +  \int_{Z\setminus[Z]} \mathrm{tr}(a_1'(x)) d\mu_\tau \\
&= &  \int_{Z} \mathrm{tr}(a_1'(x)) d\mu_\tau = \tau(a_1') \\
&\approx_{2\eps} & \tau(a').
\end{eqnarray*}
Since $\eps$ is arbitrary, this proves the desired conclusion.
\end{proof}

\begin{cor}\label{md0-divisible}
Let $(X, \Gamma)$ be a free and minimal dynamical system with the (URP), and assume that $A=\mathrm{C}(X)\rtimes\Gamma$ has Cuntz comparison of open sets. If $(X, \Gamma)$ has mean dimension zero, then, \begin{equation}\label{Cuntz-Aff}
\mathrm{W}(A) \cong \mathrm{V}(A)\sqcup \mathrm{LAff}_{\mathrm b}(\mathrm{T}(A))^{++}.
\end{equation} 
In particular, for any positive element $a\in\mathrm{M}_\infty(A)$, and any $k\in\mathbb N$, %where $A$ is the C*-algebra of the Proposition \ref{cont-approx} and any $k\in\mathbb N$, 
there is $x\in\mathrm{W}(A)$ such that 
\begin{equation}\label{div}
kx \leq [a] \leq (k+1) x.
\end{equation} 
In other words, $A$ is $0$-almost divisible (hence tracially $0$-divisible)(see Definition 3.5(i) of \cite{Winter-Z-stable-02}).
\end{cor}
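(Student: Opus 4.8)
The plan is to deduce both conclusions from two facts available under the present hypotheses: $A=\mathrm{C}(X)\rtimes\Gamma$ has strict comparison of positive elements (this is \cite{Niu-MD-Z}, since $\mathrm{mdim}(X,\Gamma)=0$ makes the comparison radius of $A$ vanish), and every continuous affine function $\mathrm{T}(A)\to(0,\infty)$ is approximately a dimension function in the sense of Proposition \ref{cont-approx}. We use freely that $\mathrm{T}(A)$ is a metrizable Choquet simplex (as $A$ is separable and unital), and that $\inf_\tau\mathrm{d}_\tau(c)>0$ for every nonzero $c\in\mathrm{M}_\infty(A)^+$, because $A$ is simple, so every trace is faithful, and $\mathrm{T}(A)$ is compact.

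\emph{Surjectivity of the representation map $\Phi$.} Since projections account for $\mathrm{V}(A)$, fix $f\in\mathrm{LAff}_{\mathrm b}(\mathrm{T}(A))^{++}$, say $f\ge\delta>0$. On the metrizable simplex $\mathrm{T}(A)$ the bounded l.s.c.\ affine function $f$ is an increasing pointwise supremum of continuous affine functions; by a compactness argument these may be taken $\ge\delta/2$, and after subtracting a summable null sequence of scalars one obtains continuous affine $f_n\nearrow f$ with $0<f_n$ and uniform gaps $f_{n+1}-f_n\ge\mu_n>0$. Applying Proposition \ref{cont-approx} to the $f_n$ with sufficiently small tolerances gives $a_n\in\mathrm{M}_\infty(A)^+$ with $\mathrm{d}_\tau(a_n)<\mathrm{d}_\tau(a_{n+1})$ for every $\tau$ and $\mathrm{d}_\tau(a_n)\to f(\tau)$ pointwise; by strict comparison $a_n\precsim a_{n+1}$. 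The supremum of $([a_n])$ in $\mathrm{Cu}(A)$ is represented by a positive element $\tilde a$, and since $\mathrm{d}_\tau$ is supremum-preserving there, $\mathrm{d}_\tau(\tilde a)=f(\tau)$ for all $\tau$; as $f<N:=\lceil\sup_\tau f(\tau)\rceil+1$, strict comparison gives $\tilde a\precsim 1_{\mathrm{M}_N(A)}$, so $\tilde a$ is Cuntz-equivalent to some $a\in\mathrm{M}_N(A)^+\subseteq\mathrm{M}_\infty(A)^+$ with $\mathrm{d}_\cdot(a)=f$, i.e.\ $\Phi([a])=f$. Together with the fact that $\Phi$ is an order embedding — for classes of projections this is comparison of projections in a simple finite C*-algebra with strict comparison, and for a class $[a]$ with $a$ not Cuntz-equivalent to a projection one turns a non-strict domination of dimension functions into a strict one using a nonzero positive $e$ with $(a-\eta)_+\oplus e\precsim a$, $\inf_\tau\mathrm{d}_\tau(e)>0$, as in Theorem 3.4 of \cite{Toms-SDG} — this proves the isomorphism \eqref{Cuntz-Aff}.

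\emph{Almost divisibility} (needing only surjectivity of $\Phi$ and strict comparison). Let $0\neq a\in\mathrm{M}_\infty(A)^+$ and $k\in\mathbb{N}$. Put $f=\mathrm{d}_\cdot(a)\in\mathrm{LAff}_{\mathrm b}(\mathrm{T}(A))^{++}$, $\delta=\inf_\tau f(\tau)>0$, and $g=\tfrac{2}{2k+1}f\in\mathrm{LAff}_{\mathrm b}(\mathrm{T}(A))^{++}$. By surjectivity there is $x\in\mathrm{W}(A)$ with $\mathrm{d}_\tau(x)=g(\tau)$ for all $\tau$, whence
$$\mathrm{d}_\tau(kx)=\tfrac{2k}{2k+1}f(\tau)<f(\tau)=\mathrm{d}_\tau(a)<\tfrac{2k+2}{2k+1}f(\tau)=\mathrm{d}_\tau((k+1)x),\qquad\tau\in\mathrm{T}(A),$$
with all inequalities strict. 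Strict comparison then gives $kx\precsim a$ and $a\precsim(k+1)x$, i.e.\ $kx\le[a]\le(k+1)x$ in $\mathrm{W}(A)$, which is \eqref{div}; tracial $0$-divisibility follows from Definition 3.5 of \cite{Winter-Z-stable-02}.

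\emph{Main obstacle.} The crux is the surjectivity step: promoting the \emph{approximate} realization of \emph{continuous} affine functions in Proposition \ref{cont-approx} to an \emph{exact} realization of an arbitrary bounded strictly positive l.s.c.\ affine function. This forces the uniform-gap bookkeeping that lines the approximants up into a Cuntz-increasing sequence with the prescribed pointwise supremum, and then a passage to an honest limit element that — thanks to the boundedness of the target — can be kept inside a fixed matrix algebra over $A$.
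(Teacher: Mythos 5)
Your proof is correct in substance, and for the isomorphism \eqref{Cuntz-Aff} it follows the same route as the paper: strict comparison is quoted from Theorem 4.8 of \cite{Niu-MD-Z}, and the approximate realization of continuous affine functions is Proposition \ref{cont-approx}; the paper then simply invokes the proof of Theorem 5.3 of \cite{BPT-Cuntz} to pass from approximate realization of continuous functions to exact realization of bounded strictly positive l.s.c.\ affine functions, whereas you re-derive that passage (gapped increasing continuous approximants, Cuntz-increasing representatives via strict comparison, supremum). One step in your re-derivation is glossed: from $[\tilde a]\le[1_{\mathrm M_N(A)}]$ in the completed Cuntz semigroup you conclude that $\tilde a$ is Cuntz equivalent to an element of $\mathrm M_N(A)$, but it is not automatic that a supremum class dominated by a projection class lies in the image of $\mathrm W(A)$ (this needs an extra argument, e.g.\ the explicit construction in \cite{BPT-Cuntz} or additional structure such as stable rank one). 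That is precisely the technical content the paper's citation covers, so either cite it as the paper does or supply that argument.

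Where you genuinely diverge is the divisibility step. The paper adds an auxiliary positive element $c$ with $\mathrm{sp}(c)=[0,1]$ and $\mathrm d_\tau(c)<\delta/k$, so that $[a]+[c]$ lies on the $\mathrm{LAff}_{\mathrm b}(\mathrm T(A))^{++}$ side of \eqref{Cuntz-Aff}, realizes $\frac{1}{k+1}\mathrm d_\tau([a]+[c])$ by some $x$, and compares using the ordered-semigroup structure; the element $c$ is what produces the strict gap at the upper end and guarantees the divided function is attained by a non-projection class. You instead realize $\frac{2}{2k+1}\mathrm d_\cdot(a)$ directly (legitimate, since $\mathrm d_\cdot(a)\in\mathrm{LAff}_{\mathrm b}(\mathrm T(A))^{++}$ by simplicity and lower semicontinuity on the compact $\mathrm T(A)$), which yields strict trace inequalities on both sides, so both $kx\le[a]$ and $[a]\le(k+1)x$ follow from strict comparison alone, with no auxiliary element, no case distinction on whether $a$ is equivalent to a projection, and no appeal to the mixed order on $\mathrm V(A)\sqcup\mathrm{LAff}_{\mathrm b}(\mathrm T(A))^{++}$. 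Your variant is a slightly cleaner and more self-contained way to get \eqref{div} (only surjectivity onto $\mathrm{LAff}_{\mathrm b}^{++}$ plus strict comparison are used), while the paper's version concentrates all the work in the single isomorphism \eqref{Cuntz-Aff}.
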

\begin{proof}
By Theorem 4.8 of \cite{Niu-MD-Z}, the C*-algebra $A$ has strict comparison of positive elements. Then \eqref{Cuntz-Aff} follows from Proposition \ref{cont-approx} and the proof of Theorem 5.3 of \cite{BPT-Cuntz}.

Let $a\in \mathrm{M}_\infty(A)$ be a non-zero positive element, and pick $\delta>0$ such that $$\mathrm{d}_\tau(a) > \delta,\quad \tau\in\mathrm{T}(A).$$ Since $A$ is simple and non-elementary, there is a non-zero positive element $c\in A$ such that $\mathrm{sp}(c) = [0, 1]$ and $$\mathrm{d}_\tau(c) < \frac{\delta}{k}< \frac{1}{k}\mathrm{d}_\tau(a),\quad \tau\in\mathrm{T}(A).$$

Consider $[a] + [c]$, and note that $[a] + [c] \in \mathrm{LAff}_{\mathrm b}(\mathrm{T}(A))^{++}$. By \eqref{Cuntz-Aff}, there is $x\in\mathrm{M}_\infty(A)$, which is not Cuntz equivalent to a projection, such that
$$\mathrm{d}_\tau(x) = \frac{1}{k+1} \mathrm{d}_\tau([a]+[c]),\quad \tau \in\mathrm{T}(A).$$

Then, for any $\tau \in \mathrm{T}(A)$, 
\begin{eqnarray*}
k(\mathrm{d}_\tau(x)) & = & \frac{k}{k+1} \mathrm{d}_\tau([a]+[c])\\ 
& = & \frac{k}{k+1} \mathrm{d}_\tau([a])+ \frac{k}{k+1}\mathrm{d}_\tau(c) \\ 
& < & \frac{k}{k+1} \mathrm{d}_\tau(a)+ \frac{1}{k+1} \mathrm{d}_\tau(a) = \mathrm{d}_\tau(a), 
\end{eqnarray*}
and
$$(k+1)(\mathrm{d}_\tau(x)) = \mathrm{d}([a]+[c])(\tau) > \mathrm{d}_\tau(a).$$ Together with \eqref{Cuntz-Aff}, this proves \eqref{div}.
\end{proof}

%\begin{cor}
%Let $A$ be a C*-algebra with $$\mathrm{W}(A\otimes \mathcal K) \cong \mathrm{V}(A)\sqcup \mathrm{LAff}^{+}(\mathrm{T}(A)).$$ Then, $A$ is tracially $0$-divisible.
%\end{cor}

\begin{rem}
Note that a straightforward argument shows that there is $m$ such that for any $k\in\mathbb N$, there is $x\in \mathrm{W}(A)$ such that  $$kx \leq [1_A] \leq m(k+1) x,$$ whenever $(X, \Gamma)$ has the (URP) and Cuntz-comparison of open sets, even without mean dimension zero. Then, as a natural question, is $\mathrm{C}(X)\rtimes\Gamma$ always tracially $m$-divisible for some $m\in\mathbb N$ if $(X, \Gamma)$ has the (URP) and Cuntz-comparison of open sets, but without any assumptions on mean dimension?
\end{rem}

\section{Approximate central order zero maps from $\mathrm{M}_k(\Comp)$ to $\mathrm{C}(X)\rtimes\Gamma$ and the $\mathcal Z$-stability of $\mathrm{C}(X)\rtimes\Gamma$} 

One considers the $\mathcal Z$-stability of $\mathrm{C}(X)\rtimes\Gamma$ in this section. First, one has the following lemma which is enssentailly Theorem 3.8 of \cite{Niu-MD-Z}, stating that the C*-algebra $A=\mathrm{C}(X)\rtimes\Gamma$ can be (weakly) tracially approximated by homogeneous C*-algebras, but with an extra conclusion that there is an element $h$ in the homogeneous sub-C*-algebra, which is approximately central in $A$, large in trace, and is orthogonal to the elements with smaller trace in the decomposition obtained from the tracial approximation.
\begin{lem}\label{cut-down}
Let $(X, \Gamma)$ be a free dynamical system with the (URP). Then, for any finitely many elements $f_1, f_2, ..., f_n\in \mathrm{C}(X)\rtimes\Gamma$ and any $\eps>0$, there exist a C*-algebra $C\subseteq \mathrm{C}(X)\rtimes\Gamma$ with $C \cong \bigoplus_{s=1}^S \mathrm{M}_{k_s}(\mathrm{C}_0(U_s))$ for some $k_s\in\mathbb N$ and locally compact Hausdorff space $U_s$, $s=1, ..., S$,  positive functions $h \in\mathrm{C}(X) \cap C$, and $f_1^{(0)}, f_1^{(1)}, f_2^{(0)}, f_2^{(1)}, ... , f_n^{(0)}, f_n^{(1)}\in \mathrm{C}(X)\rtimes\Gamma$ such that
%\begin{enumerate}
%%\item $h_1, h_2 \in C$, 
%\item $\norm{f_i - f'_i} < \eps$, $1\leq i\leq n$,
%\item $\norm{[h_1, f_i]} < \eps$, $\norm{[h_2, f_i]} < \eps$, $1\leq i\leq n$,
%\item $h_1f_ih_1 \in C$,
%\item $(1-h_1)\perp h_2 $, and 
%\item $\tau(1-h_2) < \eps$, $\tau\in\mathrm{T}(\mathrm{C}(X) \rtimes\Gamma)$.
%\end{enumerate}

\begin{enumerate}
\item $\norm{f_i - (f_i^{(0)} + f_i^{(1)})} < \eps$, $1\leq i\leq n$,
\item $f_i^{(1)} \in C$, $1\leq i\leq n$,
\item $\norm{f_i^{(0)} h} =0 $, $1\leq i\leq n$, 
\item $\norm{[f_i^{(1)}, h]} < \eps$, $1\leq i\leq n$, 
\item $\tau(1-h^2) < \eps$, $\tau\in\mathrm{T}(A)$.
\end{enumerate}
\end{lem}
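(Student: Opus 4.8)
The plan is to extract this statement essentially verbatim from Theorem 3.8 of \cite{Niu-MD-Z}, which already provides the weak tracial approximation of $A=\mathrm{C}(X)\rtimes\Gamma$ by homogeneous C*-algebras together with a nearly central, large-in-trace cut-off function $h$. The only genuinely new content is the orthogonality assertion (3), $\norm{f_i^{(0)}h}=0$, and producing $f_i^{(1)}\in C$ exactly (not merely approximately) while keeping $\norm{[f_i^{(1)},h]}<\eps$. First I would invoke Theorem 3.8 of \cite{Niu-MD-Z} with the given data $f_1,\dots,f_n$ and a small tolerance $\eps'$ (to be specified), obtaining the sub-C*-algebra $C\cong\bigoplus_{s=1}^S\mathrm{M}_{k_s}(\mathrm{C}_0(U_s))$, the positive contraction $h\in\mathrm{C}(X)\cap C$ with $\tau(1-h)<\eps'$ for all $\tau$ (hence $\tau(1-h^2)<\eps'$ after adjusting, since $1-h^2\le 2(1-h)$), the approximate commutation $\norm{[f_i,h]}<\eps'$, and the property that $hf_ih\in C$ up to $\eps'$ — the precise bookkeeping matching items (1)–(6) in the proof of Proposition \ref{cont-approx} above.

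The key construction is then to split each $f_i$ using a second, slightly larger cut-off. Choose a continuous function $g\colon X\to[0,1]$ with $g\equiv 1$ on a neighbourhood of $h^{-1}(1)$ and $gh = h$ (so $g$ dominates $h$ on its support), and arrange via the same Rokhlin-tower estimate that $\tau(1-g)$ is still small and that $g$ lies in (or is approximated within $C$ by) the relevant homogeneous block. Set
$$
f_i^{(1)} = g^{1/2}f_i g^{1/2},\qquad f_i^{(0)} = f_i - g^{1/2}f_i g^{1/2}.
$$
Then $\norm{f_i-(f_i^{(0)}+f_i^{(1)})}=0$; since $g$ is a continuous function on $X$ equal to $1$ on an open set containing $\mathrm{supp}(h)$, and $h\in\mathrm{C}(X)$, we get $f_i^{(0)}h = (1-g^{1/2})f_i h + g^{1/2}f_i(h - g^{1/2}\cdots)$ — here I would instead choose $h$ itself supported inside $g^{-1}(1)$ so that $(1-g)h = 0$ exactly, forcing $f_i^{(0)}h = f_ih - g^{1/2}f_ig^{1/2}h = f_ih - g^{1/2}f_i h = (1-g^{1/2})f_i h = 0$ because $(1-g^{1/2})h=0$. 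For (2), the point is that $g^{1/2}f_ig^{1/2}$ is, up to the $\eps'$ from Theorem 3.8, an element of $C$; a functional-calculus perturbation (replacing it by a nearby genuine element of $C$, which is possible since $C$ is hereditary-type inside $A$ on the relevant region) upgrades this to exact membership while only perturbing (1), (3), (4) by a controlled amount — one re-absorbs these errors by having started with $\eps'\ll\eps$. Finally (4), $\norm{[f_i^{(1)},h]}<\eps$, follows from $\norm{[f_i,h]}<\eps'$ together with $\norm{[g^{1/2},h]}$ small (in fact $0$ if $g,h$ are both functions of a common function, e.g. built from a single $X\to[0,1]$ map), and (5) is the trace estimate carried over from Theorem 3.8.

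The main obstacle I anticipate is reconciling the \emph{exact} conditions (2) and (3) with the \emph{approximate} output of Theorem 3.8 of \cite{Niu-MD-Z}: one must choose the two nested cut-off functions $h\prec g$ inside the \emph{same} homogeneous block structure so that $(1-g)h=0$ on the nose and $g^{1/2}f_ig^{1/2}$ genuinely lands in $C$, rather than only approximately. This is handled by going back into the tower construction underlying Theorem 3.8 — where $h$ is built from a partition-of-unity-type function subordinate to the Rokhlin towers — and building $g$ from a slightly fattened version of the same partition, so that both are continuous functions on $X$ lying in $C$ and are comparable pointwise. Everything else is a matter of choosing $\eps'$ small enough that the finitely many perturbation errors introduced when passing from "approximately in $C$" to "in $C$" sum to less than $\eps$. \qed
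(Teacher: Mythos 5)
Your overall strategy (two nested cut-offs subordinate to the Rokhlin towers, the outer one defining the piece inside $C$ and the inner one being annihilated by the complementary piece) is the right one, and it is in fact what the paper does — but the paper does not use Theorem 3.8 of \cite{Niu-MD-Z} as a black box; it redoes the tower construction, and the two places where you wave your hands are exactly where the real work lies. The concrete error is in your verification of item (3): from $f_i^{(0)}=f_i-g^{1/2}f_ig^{1/2}$ and $g^{1/2}h=h$ you get $f_i^{(0)}h=(1-g^{1/2})f_ih$, and this is \emph{not} zero just because $(1-g^{1/2})h=0$, since $f_i$ sits between $(1-g^{1/2})$ and $h$ and does not commute with them; indeed $f_i$ contains unitaries $u_\gamma$ which translate the support of $h$, so $(1-g^{1/2})u_\gamma h=u_\gamma\bigl((1-g^{1/2})\circ\gamma\bigr)h\neq 0$ in general. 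The paper avoids this by using the \emph{two-sided} compression $f_i^{(0)}=(1-h_U)^{1/2}f_i(1-h_U)^{1/2}$, so that the annihilating factor is directly adjacent to $h=h_V$ and $(1-h_U)^{1/2}h_V=0$ gives exact orthogonality; the price is that item (1) then only holds approximately, and this is where one genuinely needs $\norm{[h_U,f_i]}$ small — obtained not from a generic cut-off but from the ramp construction $h_U=\sum_{s,l,\gamma}\frac{l-1}{L}\,\chi_{U_s}\circ\gamma^{-1}$ over the levels $\Gamma_{s,l}=\mathrm{int}_{\mathcal N^{l}}(\Gamma_s)\setminus\mathrm{int}_{\mathcal N^{l+1}}(\Gamma_s)$, whose commutator with each $u_{\gamma'}$, $\gamma'\in\mathcal N$, is of order $1/L$. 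Your proposal never explains why your outer cut-off $g$ almost commutes with the $f_i$, and without the level structure it will not.

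The second gap is item (2). Saying that $g^{1/2}f_ig^{1/2}$ is "up to $\eps'$ an element of $C$" and can be upgraded to exact membership "since $C$ is hereditary-type inside $A$" is not a proof: $C\cong\bigoplus_s\mathrm{M}_{|\Gamma_s|}(\mathrm{C}_0(U_s))$ is not hereditary in $A$, and replacing $f_i^{(1)}$ by a nearby element of $C$ forces you to recheck (1), (3), (4) with the perturbed element, which you cannot do while keeping (3) exact. In the paper exact membership is structural: after reducing (as you must, but do not) each $f_i$ to a finite sum $\sum_{\gamma\in\mathcal N}f_{i,\gamma}u_\gamma$, one checks directly that $h_Uu_{\gamma'}\in C$ for $\gamma'\in\mathcal N$ (because $h_U$ is supported over $\mathrm{int}_{\mathcal N}(\Gamma_s)$, so $\gamma\gamma'$ stays in $\Gamma_s$, and $h_Uu_{\gamma'}=\sum(u_\gamma^*\chi_{U_s}^{1/2})(\chi_{U_s}^{1/2}u_{\gamma\gamma'})$ lies in $C$ by the definition of $C$ as $\mathrm{C}^*\{u_\gamma^*f:f\in\mathrm{C}_0(U_s),\gamma\in\Gamma_s\}$), whence $h_U^{1/2}f_ih_U^{1/2}\in C$ exactly. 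So the fix you gesture at ("go back into the tower construction and fatten the partition") is precisely the paper's proof; as written, your argument has a false algebraic step at (3) and an unjustified exactness claim at (2).
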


\begin{proof}
The proof is similar to that of Theorem 3.8 of \cite{Niu-MD-Z}, but without dealing with mean dimension.

Denote by $A$ the crossed product C*-algebra $\mathrm{C}(X)\rtimes\Gamma$. Without loss of generality, one may assume 
$$f_i=\sum_{\gamma \in\mathcal N} f_{i, \gamma}u_\gamma$$
%$$f_i=\sum_{j=-N}^N f_{i, j} u^j$$ and
%$$h=\sum_{j=-N}^N h_{j} u^j$$
for some finite set $\mathcal N \subseteq \Gamma$ with $e\in \mathcal N = \mathcal N^{-1}$, and some $f_{i, \gamma} \in \mathrm{C}(X)$. Denote by %$$M=\max\{1, \norm{f_{i, j}}: i=1, ..., n, j=-N, ..., N\}.$$
$$M=\max\{1, \norm{f_{i, \gamma}}: i=1, ..., n, \gamma\in\mathcal N\}.$$

For the given $\eps>0$, choose $\eps_1\in (0, \eps)$ such that if a positive element $a\in A$ with $\norm{a} \leq 1$ satisfies 
$$ \norm{af_i - f_ia} < \eps_1,\quad 1\leq i\leq n,$$ then $$ \norm{a^\frac{1}{2}f_i - f_ia^\frac{1}{2}} < \frac{\eps}{2},\quad 1\leq i\leq n.$$

Pick a natural number $$L > \frac{M\abs{\mathcal N}}{\eps_1},$$ and pick a sufficiently large finite set $K\subseteq\Gamma$ and a sufficiently small  positive number $\delta$ so that if a finite set $\Gamma_0\subseteq\Gamma$ is $(K, \delta)$-invariant, then
\begin{equation}\label{very-small-bd}
\frac{\abs{\Gamma_0 \setminus \mathrm{int}_{\mathcal N^{L+1}} (\Gamma_0)} }{\abs{\Gamma_0}} < \frac{\eps}{2}.
\end{equation}
%where $$\mathrm{int}_{\Gamma_1}(\Gamma_0) := \{\gamma\in \Gamma_0: \Gamma_1\gamma \subseteq \Gamma_0\}$$ for finite sets $\Gamma_1, \Gamma_0 \subseteq\Gamma$.

Since $(X, \Gamma)$ has the (URP), there exist closed sets $B_1, B_2, ..., B_S \subset X$ and $(K, \delta)$-invariant sets $\Gamma_1, \Gamma_2, ..., \Gamma_S \subseteq \Gamma$ such that
$$B_s\gamma,\quad \gamma\in \Gamma_s,\ s=1, ..., S, $$
are mutually disjoint and
$$\mathrm{ocap}(X\setminus\bigsqcup_{s=1}^S\bigsqcup_{\gamma\in \Gamma_s} B_s\gamma) < \frac{\eps}{2}.$$

Pick two open sets $U_s, V_s\subseteq X$, $s=1, 2, ..., S$, satisfying $$U_s \supseteq V_s \supseteq B_s,\quad  U_s \supseteq \overline{V_s},$$ and
$$U_s\gamma,\quad \gamma\in\Gamma_s,\ s=1, ..., S,$$
are mutually disjoint.

Consider the sub-C*-algebra
\begin{equation}\label{defn-C}
C:=\mathrm{C}^*\{ u^*_\gamma f: f\in\mathrm{C}_0(U_s), \gamma\in \Gamma_s, s=1, 2, ..., S\}\subseteq \mathrm{C}(X) \rtimes \Gamma,
\end{equation} 
which, by Lemma 3.11 of \cite{Niu-MD-Z}, is isomorphic to $$\bigoplus_{s=1}^S\mathrm{M}_{\abs{\Gamma_s}}(\mathrm{C}_0(U_s)).$$

For each $s=1, 2, ..., S$, pick continuous functions $\chi_{U_s}, \chi_{V_s}: X \to [0, 1]$ such that 
\begin{equation}\label{chi-cond}
\chi_{U_s}|_{V_s} = 1, \quad \chi_{V_s}|_{B_s} = 1,\quad \chi_{U_s}|_{X\setminus U_s} = 0,\quad\mathrm{and}\quad \chi_{V_s}|_{X\setminus V_s} = 0.
\end{equation}
Note that $\chi_{U_s}, \chi_{V_s} \in C$, and 
\begin{equation}\label{cut-in-C} 
\chi_{U_s} f,\  \chi_{V_s} f \in C,\quad f\in\mathrm{C}(X).
\end{equation}

For each $\Gamma_s$, $s=1, 2, ..., S$, define the subsets %(see Definition \ref{defn-amenable} of \cite{Niu-MD-Z} for the notation $\mathrm{int}_K(F)$)
$$
\left\{
\begin{array}{lll}
\Gamma_{s, L+1} & = & \mathrm{int}_{\mathcal N^{L+1}} (\Gamma_s), \\
\Gamma_{s, L} & = & \mathrm{int}_{\mathcal N^{L}} (\Gamma_s) \setminus \mathrm{int}_{\mathcal N^{L+1}} (\Gamma_s), \\
\Gamma_{s, L-1} & = & \mathrm{int}_{\mathcal N^{L-1}} (\Gamma_s) \setminus \mathrm{int}_{\mathcal N^{L}} (\Gamma_s), \\
\vdots & \vdots & \vdots \\
\Gamma_{s, 0} & = & \Gamma_s \setminus \mathrm{int}_{\mathcal N} (\Gamma_s).
\end{array}
\right.
$$

%
%$$
%\left\{
%\begin{array}{lll}
%\Gamma_{s, 0} & = & \Gamma_s \cap (\Gamma\setminus \Gamma_s)\mathcal N, \\
%\Gamma_{s, 1} & = & ( \Gamma_s \cap (\Gamma\setminus \Gamma_s)\mathcal N^2) \setminus \Gamma^{0}_s, \\
%\Gamma_{s, 2} & = & ( \Gamma_s \cap (\Gamma\setminus \Gamma_s)\mathcal N^3) \setminus \Gamma^{1}_s, \\
%\vdots & \vdots & \vdots \\
%\Gamma_{s, L} & = & ( \Gamma_s \cap (\Gamma\setminus \Gamma_s)\mathcal N^{L+1}) \setminus \Gamma^{L-1}_s,
%\end{array}
%\right.
%$$
%and
%$$\Gamma_{s, L+1} = \Gamma_s \setminus (\Gamma_{s, 0}\cup\cdots\cup \Gamma_{s, L}).$$
Then, for any $\gamma \in\mathcal N$, one has 
\begin{equation}\label{action-nbhd}
\Gamma_{s, l}\gamma \subseteq  \Gamma_{s, l-1} \cup \Gamma_{s, l} \cup \Gamma_{s, l+1},\quad 1\leq l \leq L.  \end{equation}

Indeed,  pick an arbitrary $\gamma'\in \Gamma_{s, l}$. By the construction, one has 
\begin{equation}\label{eq-contain} 
\gamma' \mathcal N^{l} \subseteq \Gamma_s\quad\mathrm{but}\quad \gamma' \mathcal N^{l+1} \nsubseteq \Gamma_s.
\end{equation} 
Therefore $$ \gamma'\gamma \mathcal N^{l-1}  \subseteq \gamma' \mathcal N^l \subseteq \Gamma_s$$ and hence $\gamma'\gamma \in \mathrm{int}_{\mathcal N^{l-1}}\Gamma_s$ (since $e\in\mathcal N^{l-1}$). 

Thus, to show \eqref{action-nbhd}, one only has to show that $\gamma'\gamma \notin \mathrm{int}_{\mathcal N^{l+2}}\Gamma_s$. Suppose $\gamma'\gamma\mathcal N^{l+2} \subseteq \Gamma_s$. Since $\mathcal N$ is symmetric, one has $\gamma^{-1}\in \mathcal N$; hence $\mathcal N^{l+1} \subseteq \gamma \mathcal N^{l+2}$ and 
$$\gamma' \mathcal N^{l+1} \subseteq \gamma'\gamma \mathcal N^{l+2} \subseteq \Gamma_s,$$ which contradicts \eqref{eq-contain}.

Also note that
\begin{equation}\label{action-nbhd-L-end}
\Gamma_{s, L+1}\gamma \subseteq  \Gamma_{s, L+1} \cup \Gamma_{s, L}.
\end{equation}

For each $\gamma \in \Gamma_s$, define
$$\ell(\gamma) = l,\quad\textrm{if $\gamma \in \Gamma_{s, l}$}.$$ By \eqref{action-nbhd} and \eqref{action-nbhd-L-end}, the function $\ell$ satisfies 
\begin{equation}\label{action-nbhd-fn}
\abs{\ell(\gamma'\gamma) - \ell(\gamma)} \leq 1,\quad \gamma'\in \mathcal N,\ \gamma\in \Gamma_{s, 1}\cup\cdots\cup\Gamma_{s, L+1}.
\end{equation}

Define
$$h_{U} = \sum_{s=1}^S\sum_{l=1}^{L+1} \sum_{\gamma\in \Gamma_{s, l}} \frac{l-1}{L}(\chi_{U_s}\circ \gamma^{-1}) = \sum_{s=1}^S \sum_{l=1}^{L+1} \sum_{\gamma\in \Gamma_{s, l}} \frac{l-1}{L} u^*_\gamma\chi_{U_s} u_\gamma \in\mathrm{C}(X)\cap C,$$
and
$$h_{V} = \sum_{s=1}^S\sum_{l=1}^{L+1} \sum_{\gamma\in \Gamma_{s, l}} \frac{l-1}{L}(\chi_{V_s}\circ \gamma^{-1}) = \sum_{s=1}^S \sum_{l=1}^{L+1} \sum_{\gamma\in \Gamma_{s, l}} \frac{l-1}{L} u^*_\gamma\chi_{V_s} u_\gamma \in\mathrm{C}(X)\cap C.$$

Note that
\begin{eqnarray*}
h_{U} h_{V} & = & ( \sum_{s=1}^S \sum_{l=1}^{L+1} \sum_{\gamma\in \Gamma_{s, l}} \frac{l-1}{L} u^*_\gamma\chi_{U_s} u_\gamma ) ( \sum_{s=1}^S \sum_{l=1}^{L+1} \sum_{\gamma\in \Gamma_{s, l}} \frac{l-1}{L} u^*_\gamma\chi_{V_s} u_\gamma ) \\
& = &  \sum_{s=1}^S \sum_{l=1}^{L+1} \sum_{\gamma\in \Gamma_{s, l}} \frac{l-1}{L} u^*_\gamma\chi_{U_s} \chi_{V_s} u_\gamma  \\
& = &  \sum_{s=1}^S \sum_{l=1}^{L+1} \sum_{\gamma\in \Gamma_{s, l}} \frac{l-1}{L} u^*_\gamma \chi_{V_s} u_\gamma  = h_{V},
\end{eqnarray*}
and hence
\begin{equation}\label{perp-cut}
(1- h_{U}) h_{V} = 0.
\end{equation}

By \eqref{chi-cond} (and \eqref{very-small-bd}), 
\begin{eqnarray*}
 \mathrm{ocap}(X \setminus h_{V}^{-1}(1)) & \leq & \max\{\frac{\abs{\Gamma_s \setminus \mathrm{int}_{\mathcal N^{L+1}} (\Gamma_s)} }{\abs{\Gamma_s}} : s=1, ..., S\}  \\ 
&& + \mathrm{ocap}(X\setminus\bigsqcup_{s=1}^S\bigsqcup_{\gamma\in \Gamma_s} B_s\gamma)  \leq \frac{\eps}{2} + \frac{\eps}{2}\\
& < & \eps,
\end{eqnarray*}
and therefore
$$\mathrm{\tau}(1-h_{V}^2) < \eps,\quad \tau \in \mathrm{T}(A).$$

Note that, by the construction of $C$ (see \eqref{defn-C}), $$\chi_{U_s}^{\frac{1}{2}} u_\gamma \in C,\quad \gamma \in\Gamma_s.$$
Hence, for each $\gamma'\in \mathcal N$, since $\gamma\gamma' \in \Gamma_s$, $\gamma\in \Gamma_{s, l}$, $l=1, 2, ..., L+1$, one has
$$ h_{U}u_{\gamma'} = \sum_{s=1}^S \sum_{l=1}^{L+1} \sum_{\gamma\in \Gamma_{s, l}} \frac{l-1}{L} u^*_\gamma\chi_{U_s} u_{\gamma\gamma'}  = \sum_{s=1}^S \sum_{l=1}^{L+1} \sum_{\gamma\in \Gamma_{s, l}} \frac{l-1}{L} (u^*_\gamma\chi^{\frac{1}{2}}_{U_s}) (\chi^{\frac{1}{2}}_{U_s} u_{\gamma\gamma'})  \in C,$$
and therefore, $$h_{U}u_\gamma h_{U}\in C,\quad \gamma \in \mathcal N.$$ 
For any $f\in \mathrm{C}(X)$, by \eqref{cut-in-C}, one has
$$h_{U}f = \sum_{s=1}^S \sum_{l=1}^{L+1} \sum_{\gamma\in \Gamma_{s, l}} \frac{l-1}{L} u^*_\gamma\chi_{U_s} u_\gamma f =  \sum_{s=1}^S \sum_{l=1}^{L+1} \sum_{\gamma\in \Gamma_{s, l}} \frac{l-1}{L} u^*_\gamma\chi_{U_s} (u_\gamma f u^*_\gamma)u_\gamma \in C,$$ and therefore
$$h_{U}f_i h_{U}\in C,\quad 1 \leq i\leq n.$$

Note that, for each $\gamma' \in\mathcal N$, by \eqref{action-nbhd-fn}, 
\begin{eqnarray*}
&& \norm{u^*_{\gamma'}h_{U}u_{\gamma'} - h_{U}}  \\
& = & \norm{ \sum_{s=1}^S\sum_{l=1}^{L+1} \sum_{\gamma\in \Gamma_{s, l}} \frac{l-1}{L}\chi_{U_s}\circ (\gamma'\gamma)^{-1}  -  \sum_{s=1}^S\sum_{l=1}^{L+1} \sum_{\gamma\in \Gamma_{s, l}} \frac{l-1}{L} \chi_{U_s}\circ \gamma^{-1}} \\
& = & \max\{ \abs{ \frac{\ell(\gamma'\gamma)-1}{L} - \frac{\ell(\gamma)-1}{L}}: \gamma\in\Gamma_s\setminus\Gamma_{s, 0},\ s=1, 2, ..., S \}\\
& < & \frac{1}{L}  <  \frac{\eps_1}{M\abs{\mathcal{N}}},
\end{eqnarray*}
and hence
\begin{equation}\label{pre-comm-p}
\norm{h_{U}f_i - f_i h_{U}} < {\eps_1},\quad i=1, 2, ..., n.
\end{equation}
The same argument also shows that
\begin{equation}\label{pre-comm-p-1}
\norm{h_{V}f_i - f_i h_{V}} < \eps_1<\eps,\quad i=1, 2, ..., n.
\end{equation}

It follows from \eqref{pre-comm-p} and the choice of $\eps_1$ that 
$$\norm{h^{\frac{1}{2}}_{U}f_i - f_i h^{\frac{1}{2}}_{U}} < \frac{\eps}{2}\quad\mathrm{and}\quad \norm{(1-h_{U})^{\frac{1}{2}}f_i - f_i (1-h_{U})^{\frac{1}{2}}} < \frac{\eps}{2},\quad i=1, 2, ..., n,$$ and hence
$$\norm{f_i - ((1-h_{U})^{\frac{1}{2}} f_i (1-h_{U})^{\frac{1}{2}} + h_{U}^{\frac{1}{2}} f_i h_{U}^{\frac{1}{2}}) } < \eps,\quad 1\leq i\leq n. $$

Put
$$f_i^{(0)} = (1-h_{U})^{\frac{1}{2}} f_i (1-h_{U})^{\frac{1}{2}}\quad\mathrm{and}\quad f_i^{(1)} = h_{U}^{\frac{1}{2}} f_i h_{U}^{\frac{1}{2}}.$$
By \eqref{perp-cut},
$$f_i^{(0)} h_{V} = 0,\quad i=1, ..., n.$$
One also has, by \eqref{pre-comm-p-1},
$$f_i^{(1)} h_{V} = h_{U}^{\frac{1}{2}} f_i h_{U}^{\frac{1}{2}} h_{V} = h_{U}^{\frac{1}{2}} f_i h_{V} h_{U}^{\frac{1}{2}}   \approx_{\eps}  h_{U}^{\frac{1}{2}}  h_{V} f_i  h_{U}^{\frac{1}{2}}  = h_{V} h_{U}^{\frac{1}{2}}  f_i  h_{U}^{\frac{1}{2}} =  h_{V} f_i^{(1)}. $$ Thus $$\norm{f_i^{(1)}h_{V} - h_{V}f_i^{(1)}} < \eps,\quad i=1, ..., n.$$ Then the element $h:=h_{V}$ satisfies the lemma.
\end{proof}

Recall
\begin{defn}[\cite{WZ-ndim}]
Let $A$, $B$ be C*-algebras, and let $\varphi: A \to B$ be a completely positive contractive linear map (c.p.c~map). $\varphi$ is said to be order zero if $$a \perp b \Longrightarrow \varphi(a) \perp \varphi(b),\quad a, b\in A.$$
\end{defn}

\begin{defn}[\cite{HO-Z}]
A unital C*-algebra $A$ is said to be tracially $\mathcal Z$-stable if for any finite set $\mathcal F \subseteq A$, any $\eps>0$, and any non-zero positive element $a\in A$, there is a c.p.c.~order zero map $\varphi: \mathrm{M}_2(\Comp) \to A$ such that 
\begin{enumerate}
\item $\norm{[\varphi(a), f]} < \eps$, $a\in\mathrm{M}_2(\Comp)$, $\norm{a} \leq 1$, $f\in\mathcal F$,
\item $1_A-\varphi(1_2) \precsim a$.
\end{enumerate}
\end{defn}

Based on \cite{Matui-Sato-CP}, for nuclear C*-algebras, the tracial $\mathcal Z$-stability is shown to be equivalent to the $\mathcal Z$-stability in \cite{HO-Z}:
\begin{thm}\label{Z-stable}
Let $A$ be a simple separable unital nuclear C*-algebra. Then $A\cong A\otimes \mathcal Z$ if and only if $A$ is tracially $\mathcal Z$-stable, where $\mathcal Z$ is the Jiang-Su algebra.
\end{thm}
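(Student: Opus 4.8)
Since Theorem~\ref{Z-stable} combines results already available in the literature, the plan is to reduce it to \cite{Matui-Sato-CP} and \cite{HO-Z}, and I only indicate the two implications. For the ``only if'' direction I would argue as follows. The Jiang--Su algebra carries, for every $k\in\mathbb N$ and every $\eps>0$, a c.p.c.\ order zero map $\psi\colon\Mat{k}\to\mathcal Z$ with $\tau(1_{\mathcal Z}-\psi(1_k))<\eps$ for the unique trace $\tau$ of $\mathcal Z$; this is visible from a unital embedding of the prime dimension-drop algebra $Z_{k,k+1}$ into $\mathcal Z$ composed with the order zero map $a\mapsto g(\cdot)(a\otimes 1)$, where $g\colon[0,1]\to[0,1]$ equals $1$ off a small neighbourhood of an endpoint. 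If $A\cong A\otimes\mathcal Z$, then $\mathrm{id}_A\otimes\psi$ is an exactly central c.p.c.\ order zero map from $\Mat{k}$ into $A\otimes\mathcal Z$ whose complement $1_A\otimes(1_{\mathcal Z}-\psi(1_k))$ has, uniformly in $\tau\in\mathrm{T}(A\otimes\mathcal Z)$, arbitrarily small dimension function. Since $A$ is simple, for a fixed nonzero positive $a$ the function $\tau\mapsto\mathrm{d}_\tau(a)$ is lower semicontinuous and strictly positive on the compact set $\mathrm{T}(A\otimes\mathcal Z)$, hence bounded below; and $\mathcal Z$-stability gives strict comparison, so the complement is Cuntz-subequivalent to $a$. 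Transporting this along the isomorphism $A\cong A\otimes\mathcal Z$ yields tracial $\mathcal Z$-stability.

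For the ``if'' direction, which is the real content, the plan is to pass to the central sequence algebra. A reindexation argument converts the approximately central tracial order zero maps supplied by the hypothesis into a single exact c.p.c.\ order zero map $\Phi\colon\Mat{2}\to A_\omega\cap A'$ whose complement $1-\Phi(1_2)$ is tracially negligible, i.e.\ Cuntz-dominated, in the appropriate tracial sense, by every nonzero positive element. One then invokes the characterization of $\mathcal Z$-stability for simple separable unital nuclear C*-algebras through such central order zero maps together with property (SI): the nuclearity of $A$ is precisely what provides the comparison and (SI)-type properties of $A_\omega\cap A'$ needed here, which is where \cite{Matui-Sato-CP} enters, and the combination produces a unital embedding of $\mathcal Z$ into $A_\omega\cap A'$, hence $A\cong A\otimes\mathcal Z$. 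This is carried out in \cite{HO-Z}, and I would simply cite it.

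The main obstacle lies entirely in the ``if'' direction: upgrading a family of approximately central, merely tracially large order zero maps into one exact central order zero map with a genuinely small complement is not formal and requires property (SI) and the full Matui--Sato machinery. Since that upgrade is exactly the content of \cite{HO-Z} (resting on \cite{Matui-Sato-CP}), I do not reprove it here.
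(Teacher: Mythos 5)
Your proposal matches the paper exactly: Theorem \ref{Z-stable} is stated there as a quotation of the Hirshberg--Orovitz result (resting on Matui--Sato), with no proof given, so deferring the ``if'' direction to \cite{HO-Z} and \cite{Matui-Sato-CP} is precisely what the paper does. Your sketch of the easy ``only if'' direction is fine in outline (modulo the standard fixes: approximate centrality against general finite subsets of $A\otimes\mathcal Z$ uses that $\mathcal Z$ is strongly self-absorbing, and smallness of the dimension function of $1-\psi(1_k)$, not just its trace, needs a functional-calculus cut-down as in Lemma \ref{pert}), and it is likewise contained in the cited work.
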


\begin{rem}
In general, there are non-nuclear C*-algebras which are tracially $\mathcal Z$-stable but not $\mathcal Z$-stable (see \cite{NW-TAF}).
\end{rem}

The following two lemmas are simple observations.
\begin{lem}\label{L-trace}
Let $A$ be a unital C*-algebra, and let $\tau$ be a tracial state of $A$. Assume $a, b\in A$ are positive elements with norm at most $1$ and $$\tau(1-a)<\eps\quad\mathrm{and}\quad\tau(1-b)<\eps,$$ then
$$\tau(ab) > 1-2\eps. $$
\end{lem}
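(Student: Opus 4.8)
The statement to prove is Lemma \ref{L-trace}: if $\tau(1-a) < \eps$ and $\tau(1-b) < \eps$ for positive contractions $a, b$, then $\tau(ab) > 1 - 2\eps$.

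\medskip

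The plan is to exploit the Cauchy--Schwarz inequality for the positive semidefinite sesquilinear form $(x, y) \mapsto \tau(y^* x)$ on $A$, together with the observation that a positive contraction $a$ satisfies $a \geq a^2$, so $\tau(a - a^2) = \tau(a(1-a)) \leq \tau(1-a) < \eps$ (using that $0 \leq a(1-a) \leq 1-a$ since $a$ is a contraction). First I would write $\tau(ab)$ in a symmetrized form: since $\tau$ is tracial, $\tau(ab) = \tau(a^{1/2} b a^{1/2})$, which makes manifest that $\tau(ab)$ is real (indeed it equals $\tau(b^{1/2} a b^{1/2}) \geq 0$). Then I would estimate
$$
1 - \tau(ab) = \tau(1 - ab) = \tau(1-a) + \tau(a - ab) = \tau(1-a) + \tau(a(1-b)).
$$
The first term is less than $\eps$ by hypothesis. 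For the second term, I would use Cauchy--Schwarz: $\tau(a(1-b))^2 \leq \tau(a^2)\,\tau((1-b)^2)$. Here $\tau(a^2) \leq \tau(1) = 1$ (as $a$ is a contraction) and $\tau((1-b)^2) \leq \tau(1-b) < \eps$ since $(1-b)^2 \leq 1-b$ for a positive contraction $b$. Hence $|\tau(a(1-b))| < \sqrt{\eps}$, which is too weak; I would instead avoid Cauchy--Schwarz and argue directly.

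\medskip

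The cleaner route, which I expect to be the actual argument, is: since $0 \leq a \leq 1$, we have $0 \leq a(1-b)a \leq (1-b)$ when $1-b \geq 0$ — more precisely, $a(1-b)a \leq \|a\|^2 (1-b) \leq 1-b$ is false in general for non-commuting elements, but $\tau(a(1-b)a)$ can still be bounded. Actually the correct and simplest estimate uses only positivity and the trace property together with $1 - ab = (1-a) + a(1-b)$ plus the fact that $\tau(a(1-b)) = \tau((1-b)^{1/2} a (1-b)^{1/2}) \geq 0$ and $(1-b)^{1/2} a (1-b)^{1/2} \leq (1-b)^{1/2} \cdot 1 \cdot (1-b)^{1/2} = 1-b$ since $a \leq 1$. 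Therefore $\tau(a(1-b)) \leq \tau(1-b) < \eps$. Combining, $1 - \tau(ab) = \tau(1-a) + \tau(a(1-b)) < \eps + \eps = 2\eps$, which is exactly the claim.

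\medskip

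The only subtle point — and the step I would flag as needing a line of justification — is the monotonicity claim that $c^* a c \leq c^* c$ whenever $0 \leq a \leq 1$ (applied with $c = (1-b)^{1/2}$), together with the fact that $\tau$ preserves the order on positive elements. Both are standard: the first follows from $c^*(1-a)c \geq 0$, and the second is immediate from positivity of $\tau$. No use of Cauchy--Schwarz or of $\eps$-perturbation estimates is needed, so the proof is short and fully rigorous. I would present it in three or four lines.
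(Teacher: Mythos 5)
Your proof is correct and is essentially the paper's argument: both come down to the elementary inequality $\tau(ab) \geq \tau(a) + \tau(b) - 1$, which the paper obtains from $0 \leq \tau\bigl((1-a)^{1/2}(1-b)(1-a)^{1/2}\bigr) = \tau(1-a-b+ab)$, while you obtain it from $0 \leq \tau\bigl((1-b)^{1/2}a(1-b)^{1/2}\bigr) \leq \tau(1-b)$ together with $1-ab = (1-a) + a(1-b)$ — two trivially equivalent uses of positivity of $\tau$ and the trace property. Your discarded Cauchy--Schwarz detour and the flagged false inequality $a(1-b)a \leq 1-b$ are correctly recognized as unnecessary, and the final short argument is complete.
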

\begin{proof}
It follows from the assumption that
$$1-\eps<\tau(a)\quad\mathrm{and}\quad -\eps <\tau(b-1).$$
Also note that $$0 \leq \tau((1-a)^{\frac{1}{2}}(1-b)(1-a)^{\frac{1}{2}}) = \tau((1-a)(1-b)) = \tau(1-a-b+ab),$$ and so
$$\tau(a+b-1) \leq \tau(ab).$$ Then
$$1-2\eps=(1-\eps) -\eps<\tau(a) + \tau(b-1) =  \tau(a+b-1) \leq \tau(ab),$$
as desired.
\end{proof}

\begin{lem}\label{pert}
For any $\eps>0$, if $\varphi: \mathrm{M}_k(\Comp) \to  A$ is a c.p.c.~order zero map with
$$\tau(1_A-\phi(1_k)) < \eps,\quad \tau\in\mathrm{T}(A),$$ then there is a c.p.c.~order zero map $\varphi': \mathrm{M}_k(\Comp) \to  A$ such that $$\norm{\varphi' - \varphi} < \sqrt{\eps}$$ and $$ \mathrm{d}_\tau(1_A -\varphi'(1_k)) < \sqrt{\eps},\quad \tau\in\mathrm{T}(A).$$
\end{lem}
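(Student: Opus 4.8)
The statement is a routine perturbation lemma for order zero maps, so the plan is to use the structure theory of c.p.c.~order zero maps together with functional calculus. Recall (from Winter–Zacharias) that a c.p.c.~order zero map $\varphi\colon \mathrm{M}_k(\Comp)\to A$ corresponds to a $*$-homomorphism $\pi\colon \mathrm{C}_0((0,1])\otimes \mathrm{M}_k(\Comp)\to A$ via $\varphi(x)=\pi(\mathrm{id}_{(0,1]}\otimes x)$; equivalently, there is a positive contraction $d=\varphi(1_k)$ in the commutant (in the bidual, or in a suitable multiplier sense) of the image of a $*$-homomorphism, and functional calculus $g(\varphi)(x):=\pi(g\otimes x)$ is again a c.p.c.~order zero map for any $g\in \mathrm{C}_0((0,1])$ with $0\le g\le 1$. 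The idea is to apply such a functional calculus with a function $g$ that equals $0$ near $0$ and $1$ on $[\sqrt\eps,1]$ (say $g_\delta$ is the piecewise-linear function that is $0$ on $[0,\delta]$, $1$ on $[2\delta,1]$, linear in between, with $\delta$ to be chosen small relative to $\eps$), so that $\varphi':=g_\delta(\varphi)$ satisfies $1_A-\varphi'(1_k)\le 1_A - $ (a spectral projection of $d$), which will have small $\mathrm{d}_\tau$.

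First I would set $\varphi'=g_\delta(\varphi)$ and estimate $\norm{\varphi'-\varphi}$. Since $\norm{\varphi'-\varphi}=\sup_{\norm{x}\le 1}\norm{\pi((g_\delta-\mathrm{id})\otimes x)}\le \norm{g_\delta-\mathrm{id}\!\restriction_{(0,1]}}_\infty = \sup_{t\in(0,1]}\abs{g_\delta(t)-t}\le 2\delta$, it suffices to take $\delta\le \sqrt\eps/2$. Second, for the rank estimate: $1_A-\varphi'(1_k)=1_A-\pi(g_\delta\otimes 1_k)$, and since $g_\delta\le \chi_{(\delta,1]}$ pointwise, we get $\varphi'(1_k)=\pi(g_\delta\otimes 1_k)\ge \pi(f_\delta\otimes 1_k)$ where $f_\delta$ is, say, the piecewise-linear function supported on $[\delta,1]$ with $f_\delta=1$ on $[2\delta,1]$ — more to the point, one uses that for any $t_0>0$, $1_A-\pi(g_\delta\otimes 1_k)\precsim 1_A-\chi_{[t_0,1]}(\varphi(1_k))$ once $g_\delta=1$ on $[t_0,1]$, so $\mathrm{d}_\tau(1_A-\varphi'(1_k))\le \tau(1_A)-\tau(\chi_{[2\delta,1]}(d))$ where $d=\varphi(1_k)$. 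Now $\tau(1_A-d)<\eps$ by hypothesis and $d\le 1$, so $\tau(d-\chi_{[2\delta,1]}(d))=\tau\big((d-\chi_{[2\delta,1]}(d))\big)\le \tau(1_A - \chi_{[2\delta,1]}(d))$; more carefully, the function $t\mapsto 1-\chi_{[2\delta,1]}(t)$ on $[0,1]$ satisfies $1-\chi_{[2\delta,1]}(t)\le \frac{1}{2\delta}(1-t)$ for all $t\in[0,1]$ (check: at $t<2\delta$ the left side is $1$ and the right side is $\ge \frac{1-2\delta}{2\delta}\ge 1$ as soon as $\delta\le 1/4$; at $t\ge 2\delta$ the left side is $0$). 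Hence $\mathrm{d}_\tau(1_A-\varphi'(1_k))\le \tau(1_A-\chi_{[2\delta,1]}(d))\le \frac{1}{2\delta}\tau(1_A-d)<\frac{\eps}{2\delta}$. Choosing $\delta=\sqrt\eps/2$ gives $\norm{\varphi'-\varphi}\le 2\delta=\sqrt\eps$ and $\mathrm{d}_\tau(1_A-\varphi'(1_k))<\frac{\eps}{2\delta}=\sqrt\eps$ for all $\tau\in\mathrm{T}(A)$, as required. (If one wants strict inequality $\norm{\varphi'-\varphi}<\sqrt\eps$ one simply takes $\delta$ slightly smaller, e.g.\ $\delta=\sqrt\eps/3$.)

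The only mild subtlety — and the step I would be most careful about — is justifying that $1_A-\pi(g_\delta\otimes 1_k)\precsim 1_A-\chi_{[2\delta,1]}(d)$ and that $\mathrm{d}_\tau$ of the former is dominated by $\tau$ of the latter. This is standard: $\pi(g_\delta\otimes 1_k)$ and $d=\pi(\mathrm{id}\otimes 1_k)$ generate a commutative C*-algebra (both lie in the image of $\mathrm{C}_0((0,1])\otimes 1_k$ under $\pi$), so one can work inside $\mathrm{C}_0(\mathrm{sp}(d)\setminus\{0\})$ and the inequality $1_A-g_\delta(d)\le \frac{1}{2\delta}(1_A-d)$ holds at the level of continuous functions, whence $\mathrm{d}_\tau(1_A-\varphi'(1_k))=\mathrm{d}_\tau(1_A-g_\delta(d))\le \mathrm{d}_\tau\big(\tfrac{1}{2\delta}(1_A-d)\big)\le \frac{1}{2\delta}\,\tau(1_A-d)$, using that $\mathrm{d}_\tau(c)\le \frac{1}{s}\tau(c)$ for any positive $c$ of norm at most a constant (more simply, $\mathrm{d}_\tau(1_A-g_\delta(d))=\tau\big(\chi_{\{0\le t<1\}}\text{-type projection}\big)\le \tau(1_A-\chi_{[2\delta,1]}(d))\le \frac{1}{2\delta}\tau(1_A-d)$). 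No hypothesis beyond $\tau(1_A-\varphi(1_k))<\eps$ and $\norm{\varphi}\le 1$ is needed, and nuclearity, simplicity, etc.\ play no role here. $\square$
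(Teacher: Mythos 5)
Your overall strategy --- realizing $\varphi$ through the Winter--Zacharias structure theorem and replacing it by $g(\varphi)$ for a suitable $g\in \mathrm{C}_0((0,1])$ --- is exactly the method the paper uses, but your choice of $g$ breaks the norm estimate: the step $\norm{g_\delta-\mathrm{id}}_\infty\le 2\delta$ is false. Your $g_\delta$ is $0$ on $[0,\delta]$ and $1$ on $[2\delta,1]$ with $\delta=\sqrt\eps/2$; for any $t\in[2\delta,1)$ one has $\abs{g_\delta(t)-t}=1-t$, so $\norm{g_\delta-\mathrm{id}}_\infty=1-2\delta$, which is of order $1$, not of order $\sqrt\eps$. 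Since the hypothesis only controls $\tau(1_A-\varphi(1_k))$ and says nothing about the spectrum of $h=\varphi(1_k)$, the spectrum can perfectly well contain points in the middle of $[0,1]$, and then $\norm{\varphi'(1_k)-\varphi(1_k)}$ really is of order $1$. In effect your $g_\delta$ approximates the indicator function of $(\delta,1]$, so $\varphi'$ is close to the supporting homomorphism $\tilde{\varphi}$, not to $\varphi$; the conclusion $\norm{\varphi'-\varphi}<\sqrt\eps$ fails.

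The cutoff must be placed at the other end of the interval, which is what the paper does: take $f(t)=\min\{t/(1-\sqrt\eps),\,1\}$, which satisfies $\norm{f-\mathrm{id}}_\infty\le\sqrt\eps$ and equals $1$ on $[1-\sqrt\eps,1]$, and set $\varphi'(a)=\tilde{\varphi}(a)f(h)$. Then $1_A-\varphi'(1_k)$ is supported where $h\le 1-\sqrt\eps$, and the Chebyshev/Markov estimate coming from $\tau(1_A-h)<\eps$, namely $\chi_{[0,1-\sqrt\eps]}(t)\le(1-t)/\sqrt\eps$, gives $\mu_\tau([0,1-\sqrt\eps])<\sqrt\eps$ and hence $\mathrm{d}_\tau(1_A-\varphi'(1_k))<\sqrt\eps$; your inequality $1-\chi_{[2\delta,1]}(t)\le\frac{1}{2\delta}(1-t)$ is correct but aimed at the wrong threshold. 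The trade-off is rigid: if your function reaches the value $1$ at height $c$ (ramp $\min\{t/c,1\}$), the distance to the identity is $1-c$ while the measure bound from the trace hypothesis is $\eps/(1-c)$, and both are at most $\sqrt\eps$ exactly when $c=1-\sqrt\eps$. Your choice $c=2\delta=\sqrt\eps$ optimizes the measure bound at the expense of the norm bound, so the argument as written does not prove the lemma; replacing $g_\delta$ by the ramp at $1-\sqrt\eps$ repairs it and recovers the paper's proof.
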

\begin{proof}
Since $\varphi$ has order zero, it follows from Theorem 1.2 of \cite{WZ-ndim} that there is $$h\in\mathcal{M}(\textrm{C*}(\varphi(\mathrm{M}_k))) \cap (\textrm{C*}(\varphi(\mathrm{M}_k)))' $$ and a unital homomorphism $$\tilde{\varphi}: \mathrm{M}_k(\Comp) \to \mathcal{M}(\textrm{C*}(\varphi(\mathrm{M}_k))) \cap(h)'$$ such that  $$\varphi(a) = \tilde{\varphi}(a) h.$$ Note that $h=\varphi(1_k)$. 

Let $\tau\in\mathrm{T}(A)$ be arbitrary, and denote by $\mu_\tau$ the probability measure induced by $\tau$ on $\mathrm{sp}(h)\subseteq [0, 1]$. Since $\tau(1_A - h) < \eps$, one has
\begin{eqnarray*}
1-\eps & < & \int_{(0, 1]} t \mathrm{d} \mu_\tau = \int_{(0, 1-\sqrt{\eps}]} t \mathrm{d} \mu_\tau + \int_{(1-\sqrt{\eps}, 1]} t \mathrm{d} \mu_\tau \\ 
& \leq & (1-\sqrt{\eps})\mu_\tau([0, 1-\sqrt{\eps}]) + (1- \mu_\tau([0, 1-\sqrt{\eps}])),
\end{eqnarray*} 
and hence
$$ \mu_\tau([0, 1-\sqrt{\eps}]) < \sqrt{\eps}.$$

Set $f(t) = \min\{\frac{t}{1-\sqrt{\eps}}, 1\}$. Consider $f(h)$ and the c.p.c.~order zero map $$\varphi':=\tilde{\varphi}(a)f(h),\quad a\in\mathrm{M}_k(\Comp).$$ Note that $\norm{h - f(h)} < \sqrt{\eps}$; one has that $$\norm{\varphi - \varphi '} < \eps.$$ On the other hand, for any $\tau\in\mathrm{T}(A)$, one has $$\mathrm{d}_\tau(1 - \varphi'(1_k)) = \mathrm{d}_\tau(1 - f(h)) = \mu_\tau([0, 1-\sqrt{\eps}]) <\sqrt{\eps},$$
as desired.
\end{proof}

\begin{prop}\label{tracial-Z}
Let $(X, \Gamma)$ be a free and minimal dynamical system with the (URP). If $\mathrm{C}(X)\rtimes\Gamma$ is tracially $m$-almost divisible for some $m\in\mathbb N$, then, for any finite set $\{f_1, f_2, ..., f_n\} \subseteq \mathrm{C}(X)\rtimes\Gamma$, any $\eps>0$, and any $k\in\mathbb{N}$, there is a c.p.c.~order zero map $\phi: \mathrm{M}_k(\Comp) \to \mathrm{C}(X)\rtimes\Gamma$ such that
\begin{enumerate}
\item $\norm{[\phi(a), f_i]} < \eps$, $a\in\mathrm{M}(\Comp)$ with $\norm{a} = 1$ and $1\leq i\leq n$, and
\item $\mathrm{d}_\tau(1_A-\phi(1_k)) < \eps$, $\tau\in\mathrm{T}(A)$.
\end{enumerate}
%$\mathrm{C}(X)\rtimes\Gamma \otimes \mathcal Z \cong \mathrm{C}(X) \rtimes \Gamma$. 
\end{prop}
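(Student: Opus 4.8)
The plan is to combine Lemma \ref{cut-down} with the tracial $m$-almost divisibility hypothesis and a standard "stacking/order zero" construction. First I would apply Lemma \ref{cut-down} to the finite set $\{f_1,\dots,f_n\}$ with a sufficiently small tolerance $\eps'$ (to be specified), obtaining a subalgebra $C\cong\bigoplus_{s=1}^S\mathrm{M}_{k_s}(\mathrm{C}_0(U_s))\subseteq A$, a positive function $h\in\mathrm{C}(X)\cap C$, and decompositions $f_i=f_i^{(0)}+f_i^{(1)}$ with $f_i^{(1)}\in C$, $f_i^{(0)}h=0$, $\norm{[f_i^{(1)},h]}<\eps'$, and $\tau(1-h^2)<\eps'$ for all $\tau\in\mathrm{T}(A)$. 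The point is that the map we build will live (up to the small error coming from $f_i^{(0)}$) inside the homogeneous algebra $C$, where constructing approximately central order zero maps from $\mathrm{M}_k(\Comp)$ is easy: a homogeneous C*-algebra $\mathrm{M}_{k_s}(\mathrm{C}_0(U_s))$ contains a copy of $\mathrm{M}_{\lfloor k_s/k\rfloor}\otimes\mathrm{M}_k$ acting within a matrix block, so there is a unital embedding $\mathrm{M}_k(\Comp)\hookrightarrow \mathrm{M}_{\lfloor k_s/k\rfloor k}(\mathrm{C}_0(U_s))$, whose image is a corner of $C$ that is central in $C$ (being scalar-valued block diagonal), with complementary rank at most $k-1$ out of $k_s$.

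Next I would cut this embedding down by $h$: define $\phi_0:\mathrm{M}_k(\Comp)\to C$ by $\phi_0(a)=h^{1/2}\psi(a)h^{1/2}$ where $\psi$ is the unital embedding into the appropriate corner of $C$ just described. Since $\psi$ is a homomorphism with central image in $C$ and $h\in C$, the map $\phi_0$ is a c.p.c.~order zero map (it has the form $\widetilde{\psi}(a)h$ with $h$ commuting with the range of $\widetilde\psi$, which is exactly the Wolff--Zacharias structure of order zero maps). Approximate centrality with respect to the $f_i$ then follows: for $f_i^{(1)}\in C$ we get $\norm{[\phi_0(a),f_i^{(1)}]}$ small because $\psi(a)$ is genuinely central in $C$ and $h$ almost commutes with $f_i^{(1)}$; for $f_i^{(0)}$ we use $f_i^{(0)}h=0$, hence $f_i^{(0)}\phi_0(a)=f_i^{(0)}h^{1/2}\psi(a)h^{1/2}=0$ and similarly $\phi_0(a)f_i^{(0)}=0$ (one needs $h^{1/2}f_i^{(0)}=0$, which follows from $hf_i^{(0)*}f_i^{(0)}h=0$ via functional calculus, or more simply one arranges in Lemma \ref{cut-down} that $f_i^{(0)}$ is itself a cut-down by $(1-h_U)^{1/2}$ orthogonal to $h=h_V$). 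So $\norm{[\phi_0(a),f_i]}<\eps/2$, say.

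It remains to control $\mathrm{d}_\tau(1_A-\phi_0(1_k))$. We have $1_A-\phi_0(1_k)=(1_A-h)+h^{1/2}(1_C-\psi(1_k))h^{1/2}$ (interpreting $1_C$ as the unit of the relevant summands, and noting $h\in C$ so $h=h^{1/2}1_C h^{1/2}$ on the support). The first summand has small trace by Lemma \ref{cut-down}(5); but small trace does not directly give small $\mathrm{d}_\tau$, so here I would invoke Lemma \ref{pert} to replace $\phi_0$ by a nearby order zero map $\phi$ with $\mathrm{d}_\tau(1_A-\phi(1_k))$ small — this works provided $\tau(1_A-\phi_0(1_k))$ is uniformly small, which holds since $\tau(h^{1/2}(1_C-\psi(1_k))h^{1/2})\leq \tau(1_C-\psi(1_k))$ and the latter, restricted to each block, is at most $(k-1)/k_s\leq \eps'$ once the $k_s$ are large (the $k_s$ can be taken large in Lemma \ref{cut-down}, since they are orbit cardinalities $\abs{\Gamma_s}$ of large Følner sets; alternatively apply $m$-almost divisibility to the corner $1_C$ to absorb the remainder). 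Choosing $\eps'$ small enough that $4\sqrt{\eps'}<\eps$ then finishes the proof. The main obstacle is the passage from "$\tau$-small" to "$\mathrm{d}_\tau$-small" for the defect projection, but Lemma \ref{pert} is tailored precisely for this; a secondary subtlety is ensuring $k_s$ (equivalently $\abs{\Gamma_s}$) is large, for which one feeds a strong enough Følner condition into the (URP) when invoking Lemma \ref{cut-down}. The tracial $m$-almost divisibility hypothesis is used only if one prefers to split off a genuinely divisible remainder rather than rely on the largeness of the $k_s$; in the argument above it guarantees that the leftover rank-$(k-1)$ piece of each block is Cuntz-dominated appropriately, keeping $\mathrm{d}_\tau(1_A-\phi(1_k))$ below $\eps$ uniformly.
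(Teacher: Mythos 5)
Your outer scaffolding matches the paper's proof (apply Lemma \ref{cut-down}, cut down by $h^{1/2}$, use orthogonality $f_i^{(0)}h=0$ for the $f_i^{(0)}$ part, use weak stability of order zero maps to repair the cut-down, and use Lemma \ref{pert} to pass from ``$\tau$-small'' to ``$\mathrm{d}_\tau$-small''). But the core step is wrong. You claim that a block-diagonal, scalar-valued copy of $\mathrm{M}_k(\Comp)$ inside (a corner of) $C\cong\bigoplus_s\mathrm{M}_{k_s}(\mathrm{C}_0(U_s))$ is ``central in $C$'' and hence essentially commutes with the $f_i^{(1)}\in C$. This cannot be: the center of $C$ is commutative, so no unital copy of $\mathrm{M}_k$ ($k\geq 2$) is central, and the relative commutant of a fixed copy of $\mathrm{M}_k$ in $\mathrm{M}_{k_s}(\mathrm{C}_0(U_s))$ is a proper subalgebra which the arbitrary elements $f_i^{(1)}=h_U^{1/2}f_ih_U^{1/2}$ have no reason to be near. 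Worse, approximate centrality is genuinely obstructed, not just unproved: for suitable finite subsets $F$ of a matrix algebra (e.g.\ a generating pair with a spectral gap), \emph{no} order zero map $\mathrm{M}_k\to\mathrm{M}_{k_s}$ with $\tau(\phi(1_k))$ close to $1$ can approximately commute with $F$ --- mutual orthogonality of the $\phi(e_{ii})$ plus near-scalarity forces each $\phi(e_{ii})$ to have small trace. So largeness of the $k_s=\abs{\Gamma_s}$ does not help, and the map cannot in general be found inside $C$ at all. A secondary flaw: $h$ is diagonal in $C$ with distinct entries, so it need not commute with your constant-matrix copy, and the asserted Wolff--Zacharias form $\tilde{\psi}(a)h$ (hence order zero-ness of $\phi_0$) also fails as stated.

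This is exactly the point where the tracial $m$-almost divisibility hypothesis --- which you dismiss as optional --- is essential in the paper. There, one notes that $\tilde{B}=B+\Comp 1_A$ has (after a small perturbation) finite nuclear dimension, and applies Lemma 5.11 of \cite{Winter-Z-stable-02} to $\tilde{B}$ \emph{inside} $A$: divisibility of $A$ plus finite nuclear dimension of the subalgebra yield a c.p.c.\ order zero map $\varphi:\mathrm{M}_k(\Comp)\to A$ (valued in $A$, not in $C$) approximately commuting with the prescribed elements $f_i^{(1)}$ and $h$ of $\tilde{B}$ and with $\tau(1_A-\varphi(1_k))$ uniformly small. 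Only then does one form $h^{1/2}\varphi(\cdot)h^{1/2}$, perturb to an order zero map by weak stability, and conclude via Lemma \ref{L-trace} and Lemma \ref{pert}. To fix your argument you would need to replace your ``central embedding into $C$'' by such an appeal to the divisibility hypothesis (or an equivalent mechanism producing approximately central order zero maps relative to $\tilde{B}$); as written, the proposal has a genuine gap.
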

\begin{proof}
Denote by $A=\mathrm{C}(X)\rtimes\Gamma$. %By \cite{Matui-Sato-CP}, it is enough to show that $A$ is tracially $\mathcal Z$-stable. Since $A$ is assumed to have strict comparison, 
By Lemma \ref{pert}, it is enough to show that for any given $\eps>0$ and any finite set $\{f_1, f_2, ..., f_n\}\subseteq A$, there is a c.p.c.~order-zero map $\phi: \mathrm{M}_k(\Comp) \to A$ such that
\begin{enumerate}
\item $\norm{[\phi(a), f_i]} < \eps$, $a\in\mathrm{M}_k(\Comp)$ with $\norm{a} = 1$ and $1\leq i\leq n$, and
\item $\tau(1_A-\phi(1_k)) < \eps$, $\tau\in\mathrm{T}(A)$.
\end{enumerate}

Since order zero maps from $\mathrm{M}_k(\Comp)$ are weakly stable (see Proposition 2.5 of \cite{KW-DR-QD}), one is able to pick $\delta>0$ sufficiently small such that if a c.p.c.~map $\rho: \mathrm{M}_k(\Comp) \to A$ satisfies $$a \perp b \Rightarrow \norm{\rho(a)\rho(b)} <\delta,\quad a, b\in \mathrm{M}_k(\Comp),\ \norm{a} = \norm{b} = 1,$$ there is a c.p.c~order zero map $\theta: \mathrm{M}_k(\Comp) \to A$ such that $$\norm{\rho(a) - \theta(a)} < \frac{\eps}{4},\quad a\in\mathrm{M}_k(\Comp),\ \norm{a} = 1.$$

By Lemma \ref{cut-down}, there are  positive elements $f_1^{0}, f_1^{(1)}, f_2^{0}, f_2^{(1)}, ..., f_n^{0}, f_n^{(1)} \in A$, a C*-algebra $B \subseteq A $ with $B \cong \bigoplus_{s=1}^S \mathrm{M}_{k_s}(\mathrm{C}_0(Z_s))$ for some locally compact Hausdorff spaces $Z_s$, $s=1, ..., S$, a positive element $h\in A$ with norm $1$ such that
\begin{equation}\label{decp} \norm{f_i - (f_i^{0} + f_i^{(1)})} < \frac{\eps}{4},\quad 1\leq i\leq n,\end{equation}
\begin{equation}\label{in-B} h\in B \quad\mathrm{and} \quad f_i^{(1)} \in B,\quad 1\leq i\leq n, \end{equation}
\begin{equation}\label{am-perp} \norm{f_i^{(0)} h^{\frac{1}{2}}} <\frac{\eps}{16}, \quad 1\leq i\leq n, \end{equation}
\begin{equation}\label{comm-0} \norm{[f_i^{(1)}, h^{\frac{1}{2}}]} < \frac{\eps}{24},\quad 1\leq i\leq n,\end{equation} 
and
\begin{equation}\label{small-tr-0} \tau(1-h) < \frac{\eps}{4},\quad \tau\in\mathrm{T}(A).\end{equation}

Consider the unitization $\tilde{B}=B+\Comp 1_A$, and note that $$\tilde{B}\cong \{ f\in \mathrm{C}(\{\infty\} \cup \bigsqcup_{s=1}^S Z_s, \bigoplus_{s=1}^S\mathrm{M}_{k_s}(\Comp)): f(\infty)\in \Comp 1\}.$$ Since the space $\{\infty\} \cup \bigsqcup_{s=1}^S Z_s$ is an inverse limit of finite dimensional CW-complexes, with a small perturbation of $f_1^{(1)}, f_2^{(1)}$, ..., $f_n^{(1)}$, and $h$, one may assume that $\tilde{B}$ (and $B$) has finite nuclear dimension. 

Since $A$ is assumed to be tracially $m$-divisible, applying Lemma 5.11 of \cite{Winter-Z-stable-02} to $\tilde{B}$ and using \eqref{in-B}, one obtains a c.p.c.~order zero map $\varphi: \mathrm{M}_k(\Comp) \to A$ such that
\begin{equation}\label{comm-1}
\norm{[\varphi(a), f_i^{(1)}]} < \frac{\eps}{24},\quad 1\leq i\leq n,\ a\in\mathrm{M}_k(\Comp), \ \norm{a}=1,
\end{equation}
\begin{equation}\label{comm-2}
\norm{[\varphi(a), h]} < \delta,\quad a\in\mathrm{M}_k(\Comp), \ \norm{a}=1,
\end{equation}
and
\begin{equation}\label{small-tr-1}
\tau(1_A - \varphi(1_k)) < \frac{\eps}{4},\quad \tau\in\mathrm{T}(A).
\end{equation}

Consider the c.p.c.~map $$\mathrm{M}_k(\Comp) \ni a \mapsto h^{\frac{1}{2}}\varphi(a)h^{\frac{1}{2}} \in A.$$ Then, for any elements $a, b\in \mathrm{M}_k(\Comp)$ with $a \perp b$ and $\norm{a} = \norm{b} =1$, one has (by \eqref{comm-2})
$$(h^{\frac{1}{2}}\varphi(a)h^{\frac{1}{2}})(h^{\frac{1}{2}}\varphi(b)h^{\frac{1}{2}}) = h^{\frac{1}{2}}\varphi(a)h \varphi(b)h^{\frac{1}{2}} \approx_{\delta} h^{\frac{3}{2}}\varphi(a) \varphi(b)h^{\frac{1}{2}} =0, $$ and hence, by the choice of $\delta$, there exists a c.p.c~order zero map $\phi:\mathrm{M}_k(\Comp) \to A$ such that 
\begin{equation}\label{pert-od0}
\norm{\phi(a) - h^{\frac{1}{2}}\varphi(a)h^{\frac{1}{2}}} < \frac{\eps}{4},\quad a\in \mathrm{M}_k(\Comp), \ \norm{a}=1.
\end{equation} 
Then, for any $a\in \mathrm{M}_k(\Comp)$ with $\norm{a} = 1$ and any $1\leq i\leq n$, one has
\begin{eqnarray*}
\norm{[\phi(a), f_i]} & < & \norm{[h^{\frac{1}{2}}\varphi(a)h^{\frac{1}{2}}, f_i]} + \frac{\eps}{2} \quad\quad \textrm{(by \eqref{pert-od0})} \\
&<& \norm{[h^{\frac{1}{2}}\varphi(a)h^{\frac{1}{2}}, f_i^{(0)} + f_i^{(1)}]} + \frac{3\eps}{4} \quad\quad\textrm{(by \eqref{decp})} \\
& = & \norm{[h^{\frac{1}{2}}\varphi(a)h^{\frac{1}{2}}, f_i^{(0)}]} + \norm{[h^{\frac{1}{2}}\varphi(a)h^{\frac{1}{2}}, f_i^{(1)}]} + \frac{3\eps}{4} \\
& < & \frac{\eps}{8} + \frac{\eps}{8} + \frac{3\eps}{4} = \eps \quad\quad\textrm{(by \eqref{am-perp}, \eqref{comm-0} and \eqref{comm-1}).}
\end{eqnarray*}
Moreover, applying Lemma \ref{L-trace} with \eqref{small-tr-0} and \eqref{small-tr-1}, together with \eqref{pert-od0}, one has $$\tau(\phi(1_k)) \approx_{\frac{\eps}{4}} \tau(h^{\frac{1}{2}}\varphi(1_k)h^{\frac{1}{2}}) = \tau(h\varphi(1_k)) > 1-\frac{\eps}{2},\quad \tau\in\mathrm{T}(A),$$
as desired.
\end{proof}

\begin{thm}\label{Z-stable}
Let $(X, \Gamma)$ be a free and minimal dynamical system with the (URP) and (COS). If $(X, \Gamma)$ has mean dimension zero, then $(\mathrm{C}(X)\rtimes\Gamma) \otimes \mathcal Z \cong \mathrm{C}(X) \rtimes \Gamma$. 

In particular, let $(X_1, \Gamma_1)$ and $(X_2, \Gamma_2)$ be two free minimal dynamical systems with the (URP), Cuntz comparison of open sets, and zero mean dimension, then  $$\mathrm{C}(X_1)\rtimes\Gamma_1 \cong \mathrm{C}(X_2)\rtimes\Gamma_2$$ if and only if $$\mathrm{Ell}(\mathrm{C}(X_1)\rtimes\Gamma_1) \cong \mathrm{Ell}(\mathrm{C}(X_2)\rtimes\Gamma_2),$$ where $\mathrm{Ell}(\cdot) = (\Kzero(\cdot), \Kzero^+(\cdot), [1], \mathrm{T}(\cdot), \rho, \Kone(\cdot))$ is the Elliott invariant. Moreover, these C*-algebras are inductive limits of unital subhomogeneous C*-algebras.
\end{thm}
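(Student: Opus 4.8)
The plan is to derive the theorem by assembling the machinery already in place. The main inputs are that $A:=\mathrm{C}(X)\rtimes\Gamma$ is $0$-almost divisible (Corollary~\ref{md0-divisible}), that it has strict comparison of positive elements (Theorem~4.8 of \cite{Niu-MD-Z}), and that $0$-almost divisibility together with the (URP) produces approximately central, tracially large c.p.c.\ order zero maps $\mathrm{M}_k(\Comp)\to A$ (Proposition~\ref{tracial-Z}, which itself rests on Proposition~\ref{cont-approx}); the conclusion is then extracted through the abstract equivalence of \cite{Matui-Sato-CP} and \cite{HO-Z} and the Elliott classification theorem.

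\emph{Step 1: show that $A$ is tracially $\mathcal Z$-stable.} Fix a finite set $\mathcal F\subseteq A$, an $\eps>0$, and a non-zero positive element $a\in A$. Since $\Gamma$ is amenable, $\mathcal M_1(X,\Gamma)\ne\varnothing$, so $\mathrm{T}(A)$ is a non-empty compact Choquet simplex; since $A$ is simple and $a\ne 0$, the lower semicontinuous function $\tau\mapsto\mathrm{d}_\tau(a)$ is strictly positive on $\mathrm{T}(A)$, whence $\eta:=\inf_{\tau\in\mathrm{T}(A)}\mathrm{d}_\tau(a)>0$. I would then apply Proposition~\ref{tracial-Z} with $k=2$ and with tolerance $\min\{\eps,\eta\}$ in place of $\eps$ (its hypothesis being exactly the conclusion of Corollary~\ref{md0-divisible}) to obtain a c.p.c.\ order zero map $\phi\colon\mathrm{M}_2(\Comp)\to A$ with $\norm{[\phi(x),f]}<\eps$ for all $f\in\mathcal F$ and all contractions $x\in\mathrm{M}_2(\Comp)$, and with $\mathrm{d}_\tau(1_A-\phi(1_2))<\eta$ for every $\tau\in\mathrm{T}(A)$. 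By the structure theorem for order zero maps, $\phi(1_2)$ is a positive contraction with $\phi(1_2)\le 1_A$, so $1_A-\phi(1_2)$ is positive and $\mathrm{d}_\tau(1_A-\phi(1_2))<\eta\le\mathrm{d}_\tau(a)$ for all $\tau$; strict comparison then yields $1_A-\phi(1_2)\precsim a$. This is precisely the defining property of tracial $\mathcal Z$-stability.

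\emph{Step 2: upgrade to $\mathcal Z$-stability and conclude.} The algebra $A$ is separable and unital, it is nuclear because $\Gamma$ is amenable, and it is simple because $(X,\Gamma)$ is minimal and topologically free; hence, by \cite{Matui-Sato-CP} and \cite{HO-Z}, Step~1 gives $A\cong A\otimes\mathcal Z$. As $A$ additionally satisfies the UCT (amenability of $\Gamma$ makes the transformation groupoid amenable), the Elliott classification theorem for simple separable unital nuclear $\mathcal Z$-stable C*-algebras satisfying the UCT applies to $A_j=\mathrm{C}(X_j)\rtimes\Gamma_j$ for $j=1,2$: functoriality of the invariant gives the trivial implication and the classification theorem gives its converse, so $\mathrm{C}(X_1)\rtimes\Gamma_1\cong\mathrm{C}(X_2)\rtimes\Gamma_2$ if and only if their Elliott invariants agree. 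Finally, since the models exhausting the range of the invariant may be taken to be inductive limits of unital subhomogeneous C*-algebras (\cite{EGLN-ASH}, \cite{ENST-ASH}), each $\mathrm{C}(X_j)\rtimes\Gamma_j$ is isomorphic to such an inductive limit.

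\emph{Where the difficulty lies.} The substantive work has been carried out upstream, in Proposition~\ref{cont-approx}, Corollary~\ref{md0-divisible} and Proposition~\ref{tracial-Z}, so the argument above is largely bookkeeping. The one place that genuinely needs care is Step~1: tracial $\mathcal Z$-stability demands the Cuntz subequivalence $1_A-\phi(1_2)\precsim a$, not merely that $\mathrm{d}_\tau(1_A-\phi(1_2))$ be small, so one must use simplicity (to ensure $\eta>0$) and strict comparison together at exactly that juncture. A secondary point is the appeal to the UCT for $\mathrm{C}(X)\rtimes\Gamma$ and to the range-of-invariant results underpinning the last assertion.
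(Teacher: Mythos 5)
Your proposal is correct and follows essentially the same route as the paper: Corollary \ref{md0-divisible} for tracial $0$-divisibility, Theorem 4.8 of \cite{Niu-MD-Z} for strict comparison, Proposition \ref{tracial-Z} to produce the approximately central order zero maps, and the Matui--Sato/Hirshberg--Orovitz equivalence to pass from tracial $\mathcal Z$-stability to $\mathcal Z$-stability, with the classification statement then following from the UCT and the known classification and range-of-invariant results. Your Step 1 merely spells out the small-versus-strict-comparison bookkeeping (simplicity giving $\eta>0$, then $\mathrm{d}_\tau(1_A-\phi(1_2))<\eta\le\mathrm{d}_\tau(a)$) that the paper leaves implicit, and this is done correctly.
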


\begin{proof}
It follows from Corollary \ref{md0-divisible} that $\mathrm{C}(X)\rtimes\Gamma$ is tracially $0$-divisible. It follows from Theorem 4.8 of \cite{Niu-MD-Z} that $\mathrm{C}(X)\rtimes\Gamma$ has strict comparison of positive elements. Together with Proposition \ref{tracial-Z}, one has that $\mathrm{C}(X)\rtimes\Gamma$ is tracially $\mathcal Z$-stable. Since $\mathrm{C}(X)\rtimes\Gamma$ is nuclear,  it is $\mathcal Z$-stable as desired.
\end{proof}

\begin{cor}\label{Z-corollary}
Let $(X, \Gamma)$ be a free and minimal dynamical system with mean dimension zero. Assume that
\begin{itemize}
\item either $\Gamma = \Int^d$ for some $d\geq 1$, or
\item $(X, \Gamma)$ is an extension of a Cantor system and $\Gamma$ has subexponetial growth.
\end{itemize}
Then, the C*-algebra $\mathrm{C}(X) \rtimes \Gamma$ is classified by the Elliott invariant and is an inductive limit of unital subhomogeneous C*-algebras. % $(\mathrm{C}(X)\rtimes\Gamma) \otimes \mathcal Z \cong \mathrm{C}(X) \rtimes \Gamma$. 
\end{cor}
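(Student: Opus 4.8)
The plan is to deduce Corollary \ref{Z-corollary} directly from Theorem \ref{Z-stable} together with the results quoted in the introduction, after verifying that the two families of systems listed satisfy the (URP) and (COS). First I would recall from \cite{Niu-MD-Zd} that every free minimal $\Int^d$-action has the (URP) and has $(\frac14, (2\lfloor\sqrt d\rfloor+1)^d+1)$-Cuntz-comparison of open sets, and from \cite{Niu-MD-Z} that every free minimal $\Gamma$-action with $\Gamma$ of subexponential growth which is an extension of a Cantor system has the (URP) and $(\frac14,1)$-Cuntz-comparison of open sets. In either case the hypotheses of Theorem \ref{Z-stable} are met: the system is free, minimal, has the (URP), has (COS), and by assumption has mean dimension zero. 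Hence $\mathrm{C}(X)\rtimes\Gamma$ is $\mathcal Z$-stable.

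Next I would invoke the classification half of Theorem \ref{Z-stable}: since $\mathrm{C}(X)\rtimes\Gamma$ is simple, separable, unital, nuclear, satisfies the UCT (as noted in the preliminaries, via \cite{Williams} and the Effros–Hahn/ZM-prod simplicity results), and is now known to be $\mathcal Z$-stable, the Elliott classification theorem (as cited, \cite{ENST-ASH}, \cite{EGLN-ASH}, \cite{EN-MD0}) applies. Therefore $\mathrm{C}(X)\rtimes\Gamma$ is classified by its Elliott invariant $\mathrm{Ell}(\cdot)=(\Kzero,\Kzero^+,[1],\mathrm{T},\rho,\Kone)$, and moreover, by the structure theorem underlying that classification, it is an inductive limit of unital subhomogeneous C*-algebras. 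This is literally the ``in particular'' clause of Theorem \ref{Z-stable} specialized to the two classes of groups and dynamical systems in the hypotheses.

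The only genuine content beyond Theorem \ref{Z-stable} is the verification that the listed systems have the (URP) and (COS), and that is already done in the companion papers \cite{Niu-MD-Zd} and \cite{Niu-MD-Z}; no new argument is required here. The main ``obstacle,'' such as it is, lies entirely upstream in Theorem \ref{Z-stable} and the cited classification machinery — in particular in establishing tracial $\mathcal Z$-stability via Corollary \ref{md0-divisible} and Proposition \ref{tracial-Z}, and in the external input that nuclearity upgrades tracial $\mathcal Z$-stability to $\mathcal Z$-stability (\cite{HO-Z}, building on \cite{Matui-Sato-CP}). For the corollary itself the proof is a short chain of citations: confirm (URP) and (COS) for each bullet, apply Theorem \ref{Z-stable}, and read off $\mathcal Z$-stability, Elliott classifiability, and the subhomogeneous inductive-limit structure.
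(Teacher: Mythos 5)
Your proposal is correct and follows the same route as the paper: cite \cite{Niu-MD-Zd} and \cite{Niu-MD-Z} to verify the (URP) and (COS) for the two listed classes of systems, then apply Theorem \ref{Z-stable} to conclude $\mathcal Z$-stability, Elliott classifiability, and the subhomogeneous inductive-limit structure. No gaps.
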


\begin{proof}
It follows from \cite{Niu-MD-Z} and \cite{Niu-MD-Zd} that the dynamical systems being considered have the (URP) and (COS). The statement then follows from Theorem \ref{Z-stable}.
\end{proof}

\section{An alternative approach: $\mathrm{mdim}0 + \mathrm{URP} \Rightarrow \mathrm{SBP}$}\label{SBP}

In this section, one considers the zero mean dimension together with the (URP), and shows that these two conditions actually implies that the dynamical system has the small boundary property (SBP). Together with \cite{KS-comparison} and \cite{Niu-MD-Z}, this gives another proof of Theorem \ref{Z-stable}.

\begin{thm}\label{md0-2-sbp}
Let $(X, \Gamma)$ be a free topological dynamical system with the (URP). If $$\mathrm{mdim}(X, \Gamma) = 0,$$ then $(X, \Gamma)$ has the small boundary property.
\end{thm}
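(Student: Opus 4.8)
The plan is to reduce the small boundary property to a one–parameter statement about orbit capacities of level sets of continuous functions, and then to prove that statement by combining the Rokhlin towers furnished by the (URP) with the vanishing of the mean dimension, expressed through Gromov's width–dimension reformulation (\cite{Gromov-MD}, \cite{Lindenstrauss-Weiss-MD}); this is the natural generalization, to amenable $\Gamma$ and the (URP), of Gutman's theorem for $\mathbb Z$-actions with the marker property.

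\emph{Reduction.} A set $E\subseteq X$ is called null if $\mathrm{ocap}(E)=0$, and the family of open sets with null boundary is closed under finite unions, intersections and complements, since $\partial(V\cap W),\ \partial(V\cup W)\subseteq\partial V\cup\partial W$ and $\partial(X\setminus V)=\partial V$; hence it suffices to produce, for every $x\in X$ and every open $U\ni x$, an open $V$ with $x\in V\subseteq U$ and $\mathrm{ocap}(\partial V)=0$. Choosing a Urysohn function $g\colon X\to[0,1]$ with $g(x)=0$ and $g\equiv 1$ on $X\setminus U$, and setting $V=g^{-1}([0,c))$ for $c\in(0,1)$, one has $x\in V\subseteq U$ and $\partial V\subseteq g^{-1}(c)$. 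So the theorem follows once one shows that for every continuous $g\colon X\to[0,1]$ the set $\set{c\in(0,1):\mathrm{ocap}(g^{-1}(c))>0}$ has empty interior; for this it is enough that it have Lebesgue measure zero, and for that it is enough that for every $\eps>0$ the set $\set{c:\mathrm{ocap}(g^{-1}(c))\ge\eps}$ have Lebesgue measure at most $\eps$.

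\emph{Towers.} Fix $g$ and $\eps$. Applying the (URP) with a finite $K\subseteq\Gamma$ and $\delta>0$ to be specified, obtain closed sets $B_1,\dots,B_S$ and $(K,\delta)$-invariant sets $\Gamma_1,\dots,\Gamma_S$ with $\set{B_s\gamma:\gamma\in\Gamma_s,\ 1\le s\le S}$ disjoint and $\mathrm{ocap}(X\setminus\bigsqcup_{s,\gamma}B_s\gamma)<\eps/3$; disjointness makes each $B_s\times\Gamma_s\to X$, $(b,\gamma)\mapsto b\gamma$, injective. Averaging $\chi_{g^{-1}(c)}$ over a F{\o}lner set $\Gamma_n$ and grouping the hits of the orbit with the tower column by column — the straddling error being absorbed into the $(K,\delta)$-invariance of the $\Gamma_s$ and the orbit capacity of the tower complement — yields
\[
\mathrm{ocap}(g^{-1}(c))\ \le\ \frac{\eps}{3}\ +\ \max_{1\le s\le S}\ \frac{1}{\abs{\Gamma_s}}\ \sup_{z\in B_s}\ \#\set{\gamma\in\Gamma_s:\ g(z\gamma)=c}.
\]
It therefore remains to choose $K,\delta$ (hence the towers), and then a set of $c$'s of measure $<\eps/3$ outside which the last term is $<2\eps/3$.

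\emph{Using $\mathrm{mdim}=0$.} For $z\in B_s$ put $\psi_s(z)=(g(z\gamma))_{\gamma\in\Gamma_s}\in[0,1]^{\Gamma_s}$; then $\psi_s(B_s)$ lies inside the set of $\Gamma_s$-orbit patterns of $g$, so by the width–dimension description of mean dimension, $\mathrm{mdim}(X,\Gamma)=0$ gives, for prescribed $\rho>0$ and $\delta'>0$, that $\mathrm{Widim}_\rho(\psi_s(B_s))<\delta'\abs{\Gamma_s}$ provided $\abs{\Gamma_s}$ is large — which is arranged by taking $K$ large and $\delta$ small in the (URP). Fix a continuous $f_s\colon\psi_s(B_s)\to P_s$ onto a simplicial complex with $\dim P_s<\delta'\abs{\Gamma_s}$ and $\ell^\infty$-fibres of diameter $<\rho$, and a simplexwise–linear lift $\Phi_s\colon P_s\to[0,1]^{\Gamma_s}$ with $\norm{\Phi_s(f_s(y))-y}_\infty<\rho$; then $\#\set{\gamma:g(z\gamma)=c}\le\#\set{\gamma:\Phi_s(f_s(\psi_s(z)))(\gamma)\in(c-\rho,c+\rho)}$. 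On each simplex $\sigma$ of $P_s$, $\Phi_s$ is an affine combination of at most $\delta'\abs{\Gamma_s}+1$ vertices, so this count is a function of a parameter ranging over a simplex of dimension $\le\delta'\abs{\Gamma_s}$, piecewise constant with cells cut out by the $2\abs{\Gamma_s}$ hyperplanes $\set{\,\cdot\, (\gamma)=c\pm\rho}$. A Fubini estimate in $c$ (for a fixed parameter, the average over $c$ of the count is $\le 2\rho\abs{\Gamma_s}$) together with the cell count then shows that, with $\rho$ small relative to $\eps$ and $\delta'$ small relative to $\eps$, the set of $c$ with $\sup_{z\in B_s}\#\set{\gamma:g(z\gamma)=c}\ge\tfrac{2\eps}{3}\abs{\Gamma_s}$ has measure $<\eps/(3S)$; the union over $s$ finishes the proof.

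\emph{Main obstacle.} The delicate step is the last one: converting ``$\psi_s(B_s)$ factors at scale $\rho$ through a complex of dimension $<\delta'\abs{\Gamma_s}$'' into a measure bound on the bad levels $c$ that is uniform over all $z$ in all tower bases. The naive Fubini argument fails because the supremum over $z$ is inside the integral over $c$; what rescues it is precisely the \emph{low dimensionality} of the parameter (the simplices of $P_s$), via cell–counting for the associated hyperplane arrangement, and getting the quantifiers in the right order — first $\eps$, then $\rho$ and $\delta'$, then $K$ and $\delta$, then the towers — is where the care lies. The F{\o}lner bookkeeping in the ``Towers'' step, absorbing the column–straddling error into the $(K,\delta)$-invariance and the orbit capacity of the tower complement, is routine but must be carried out with the $B_s$ only closed (not clopen) and with the lift $\Phi_s$ chosen so that the approximation $g(z\gamma)\approx_\rho\Phi_s(f_s(\psi_s(z)))(\gamma)$ is genuinely uniform in $z$.
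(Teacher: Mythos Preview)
Your reduction is too strong and cannot be carried through. You aim to show that for \emph{every} continuous $g:X\to[0,1]$ the set $\set{c:\mathrm{ocap}(g^{-1}(c))\ge\eps}$ has Lebesgue measure at most $\eps$, but this fails under the hypotheses of the theorem. Take any free system $(Y,\Gamma)$ with the (URP) and $\mathrm{mdim}(Y,\Gamma)=0$, and form $X=Y\times[0,1]$ with $\Gamma$ acting trivially on the second factor. Then $(X,\Gamma)$ is still free, inherits the (URP) with bases $B_s^Y\times[0,1]$, and has $\mathrm{mdim}(X,\Gamma)=0$ since the interval contributes only a bounded additive term to $D(\bigvee_\gamma\mathcal U\gamma)$. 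For $g(y,t)=t$, however, every level set $g^{-1}(c)=Y\times\set{c}$ is $\Gamma$-invariant and nonempty, hence has $\mathrm{ocap}=1$; so $\set{c:\mathrm{ocap}(g^{-1}(c))\ge\eps}=[0,1]$. Your own tower estimate witnesses the failure: here $\psi_s(z)=(t,t,\dots,t)$ is constant across $\Gamma_s$, so $\sup_{z\in B_s}\#\set{\gamma\in\Gamma_s:g(z\gamma)=c}=\abs{\Gamma_s}$ for every $c\in[0,1]$, and no Fubini or hyperplane-arrangement argument can make the $\max_s$ term small. The step you flag as the ``main obstacle'' is not merely delicate---it is impossible as stated.

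The paper avoids this by using a different reduction (Lemma 5.5 and Theorem 5.3 of \cite{GLT-Zk}): one shows that every continuous $f$ can be uniformly \emph{approximated} by some $g$ with $\mathrm{ocap}(g^{-1}(0))<\eps$. The perturbation from $f$ to $g$ is the substance of the argument. One first passes, via a partition of unity subordinate to a cover coming from $\mathrm{mdim}=0$, through the nerve $\Delta_s$ with $\dim\Delta_s<\frac{\eps}{3}\abs{\Gamma_s}$; this is morally your factoring through $P_s$. The decisive missing ingredient is then a transversality-type perturbation (Lemma 5.7 of \cite{GLT-Zk}) of the resulting piecewise-linear map $F_s:\mathrm C\Delta_s\to\Real^{\abs{\Gamma_s}}$, which moves its zero set into general position so that at each point at most $\dim\Delta_s$ coordinates of the perturbed map vanish. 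It is this perturbation---not an analysis of the level sets of the original function---that converts low dimension into small orbit capacity.
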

\begin{proof}
It follows from Lemma 5.5 and Theorem 5.3 of \cite{GLT-Zk} that, in order to show that $(X, \Gamma)$ has the (SBP), it is enough to show that for any continuous function $f: X \to\mathbb R$ and any $\eps>0$, there is a continuous function $g: X \to \mathbb R$ such that 
\begin{enumerate}
\item $\norm{f - g}_\infty < \eps$, and
\item $\mathrm{ocap}(\{x\in X: g(x) = 0\}) < \eps$.
\end{enumerate}

Let $f: X \to\mathbb R$ and $\eps>0$ be given. Pick $\mathcal U$ to be a finite open cover of $X$ such that
$$\abs{f(x) - f(y)} < \frac{\eps}{3} ,\quad x, y\in U, U\in\mathcal U.$$

Since $\mathrm{mdim}(X, \Gamma) = 0$, there is $(K, \eps')$, where $K\subseteq \Gamma$ is a finite set and $\eps'>0$, such that if $\Gamma_0\subseteq \Gamma$ is $(K, \eps')$-invariant, there is an open cover $\mathcal V$ such that
\begin{enumerate}
\item\label{a} $\mathcal V$ refines $ \bigvee_{\gamma\in\Gamma_0} \mathcal U\gamma$, and
\item\label{b} $\mathrm{ord}(\mathcal V) < \frac{\eps}{3}\abs{\Gamma_0}$.
\end{enumerate}

Since $(X, \Gamma)$ has the (URP), there are closed sets $B_1, B_2, ..., B_S$ and $(K, \eps')$-invariant sets $\Gamma_1, \Gamma_2, ..., \Gamma_S \subseteq\Gamma$ such that 
$$B_s\gamma,\quad \gamma\in\Gamma_s,\  1\leq s\leq S$$
are mutually disjoint and
$$\mathrm{ocap}(X \setminus \bigsqcup_{s=1}^{S}\bigsqcup_{\gamma\in\Gamma_s} B_s\gamma) < \frac{\eps}{3}.$$

Pick a small neighborhood $U_s$ of each $B_s$, $s=1, 2, ..., S$, such that $$U_s\gamma,\quad \gamma\in\Gamma_s,\  1\leq s\leq S,$$
are still mutually disjoint. Note that 
$$\mathrm{ocap}(X \setminus \bigsqcup_{s=1}^{S}\bigsqcup_{\gamma\in\Gamma_s} U_s\gamma) \leq \mathrm{ocap}(X \setminus \bigsqcup_{s=1}^{S}\bigsqcup_{\gamma\in\Gamma_s} B_s\gamma)   < \frac{\eps}{3}.$$

For each $s=1, 2, ..., S$, since $\Gamma_s$ is $(K, \eps')$-invariant, there is an open cover $\mathcal V$ of $X$ such that 
\begin{enumerate}
\item\label{a} $\mathcal V$ refines $ \bigvee_{\gamma\in\Gamma_0} \mathcal U\gamma$, and
\item\label{b} $\mathrm{ord}(\mathcal V) < \frac{\eps}{3} \abs{\Gamma_s}$.
\end{enumerate}
Then, consider the collection of open sets
$$\mathcal V_s:=\{V \cap U_s: V\in\mathcal V\}.$$
Note that $\mathcal V_s$ covers $B_s$ and for any $V\in\mathcal V_s$ and any $\gamma\in \Gamma_s$, there is $U\in\mathcal U$ such that $$V\gamma \subseteq U.$$

For $\mathcal V_s$, pick continuous functions $$\phi_V^{(s)}: X \to [0, 1],\quad V\in \mathcal V_s$$ such that
$$(\phi_V^{(s)})^{-1}((0, 1]) \subseteq V,$$ 
$$\sum_{V \in\mathcal V_s} \phi_V^{(s)}(x) \leq 1,\quad x \in X,\quad \mathrm{and}$$
$$\sum_{V \in\mathcal V_s} \phi_V^{(s)}(x) = 1,\quad x\in B_s.$$

For $\mathcal V_s$, also consider the simplicial complex $\Delta_s$ spanned by $[V]$, $V\in \mathcal V_s$, with $$[V_0], [V_1], ...,[V_d]$$ span a simplex if and only if $$V_0\cap V_1 \cap \cdots \cap V_d \neq \varnothing.$$ Note that
\begin{equation}\label{small-ratio}
\mathrm{dim}(\Delta_s) = \mathrm{ord}(\mathcal V_s) \leq \mathrm{ord}(\mathcal V) \leq \frac{\eps}{3}\abs{\Gamma_s}.
\end{equation}
Define the map
$$\eta_s: X \ni x \mapsto \sum_{V \in \mathcal V_s}\phi_{V}^{(s)}(x) [V] \in \mathrm{C}\Delta_s,$$
where $\mathrm{C}\Delta_s$ is the cone over $\Delta_s$. Note that $$\eta_s(B_s) \subseteq \Delta_s.$$

For each $V\in\mathcal V_s$, pick a point $x^*_V\in V$, and define
$$\tilde{f} = f(1-\sum_{s=1}^S\sum_{\gamma\in\Gamma_s}\sum_{V\in\mathcal V_s}(\phi_V^{(s)}\circ\gamma^{-1})) + \sum_{s=1}^S\sum_{\gamma\in\Gamma_s}\sum_{V\in\mathcal V_s}f(x_V^*\gamma)(\phi_V^{(s)}\circ\gamma^{-1}).$$ Then, for any $x\in X$,
\begin{eqnarray*}
 & & \abs{f(x) - \tilde{f}(x)} \\
 & = &\abs{f(x) -  (f(x)(1-\sum_{s=1}^S\sum_{\gamma\in\Gamma_s}\sum_{V\in\mathcal V_s}\phi_V^{(s)}(x\gamma^{-1})) + \sum_{s=1}^S\sum_{\gamma\in\Gamma_s}\sum_{V\in\mathcal V_s}f(x_V^*\gamma)\phi_V^{(s)}(x\gamma^{-1}))} \\
 & = & \left| f(x) (1-\sum_{s=1}^S\sum_{\gamma\in\Gamma_s}\sum_{V\in\mathcal V_s}\phi_V^{(s)}(x\gamma^{-1}) + \sum_{s=1}^S\sum_{\gamma\in\Gamma_s}\sum_{V\in\mathcal V_s}\phi_V^{(s)}(x\gamma^{-1})) -  \right. \\
 & & \left. (f(x)(1-\sum_{s=1}^S\sum_{\gamma\in\Gamma_s}\sum_{V\in\mathcal V_s}\phi_V^{(s)}(x\gamma^{-1})) + \sum_{s=1}^S\sum_{\gamma\in\Gamma_s}\sum_{V\in\mathcal V_s}f(x_V^*\gamma)\phi_V^{(s)}(x\gamma^{-1})) \right| \\
 & = & \abs{\sum_{s=1}^S\sum_{\gamma\in\Gamma_s}\sum_{V\in\mathcal V_s} (f(x) - f(x_V^*\gamma)) \phi_V^{(s)}(x\gamma^{-1}))} \\
 & \leq & \sum_{s=1}^S\sum_{\gamma\in\Gamma_s}\sum_{V\in\mathcal V_s}\abs{f(x) - f(x_V^*\gamma)} \phi_V^{(s)}(x\gamma^{-1})) \\
 & < & \frac{\eps}{3}.
\end{eqnarray*}
That is, 
\begin{equation}\label{small-pert-0}
\norm{f - \tilde{f}} < \frac{\eps}{3}.
\end{equation}

Define piecewise linear function $F_s: \mathrm{C}\Delta_s \to \Real^{\abs{\Gamma_s}}$ by
$$F_s([V]) = \bigoplus_{\gamma \in \Gamma_s}f(x_V^*\gamma) \in \Real^{\abs{\Gamma_s}}.$$ Then
$$\tilde{f} = f(1-\sum_{s=1}^S\sum_{\gamma\in\Gamma_s}\sum_{V\in\mathcal V_s}(\phi_V^{(s)}\circ\gamma^{-1})) + \sum_{s=1}^S\sum_{\gamma\in \Gamma_s}\pi_{s, \gamma} \circ F_s\circ\eta_s\circ\gamma^{-1},$$
where $\pi_{s, \gamma}$ is the projection of $\Real^{\abs{\Gamma_s}}$ to the $\gamma$-coordinate.

By Lemma 5.7 of \cite{GLT-Zk}, there is a linear map $\tilde{F}_s: \mathrm{C}\Delta_s \to \Real^{\abs{\Gamma_s}}$ such that
$$\norm{F_s - \tilde{F}_s}_\infty < \frac{\eps}{3}$$ and
\begin{equation}\label{small-zero}
\abs{\{\gamma\in\Gamma_s: \pi_{s, \gamma}(\tilde{F}_s(x)) = 0\}} \leq \mathrm{dim}\Delta_s,\quad x\in \mathrm{C}\Delta_s.
\end{equation}

Put $$g = f(1-\sum_{s=1}^S\sum_{\gamma\in\Gamma_s}\sum_{V\in\mathcal V_s}(\phi_V^{(s)}\circ\gamma^{-1})) + \sum_{s=1}^S\sum_{\gamma\in \Gamma_s}\pi_{s, \gamma} \circ \tilde{F}_s\circ\eta_s\circ\gamma^{-1},$$ and then, for any $x\in X$, 
\begin{eqnarray*}
& & \abs{\tilde{f}(x) - g(x)} \\
& = & \abs{\sum_{s=1}^S\sum_{\gamma\in \Gamma_s}\pi_{s, \gamma} \circ F_s\circ\eta_s(x\gamma^{-1}) - \sum_{s=1}^S\sum_{\gamma\in \Gamma_s}\pi_{s, \gamma} \circ \tilde{F}_s\circ\eta_s(x\gamma^{-1})} \\
& = & \abs{\sum_{s=1}^S\sum_{\gamma\in \Gamma_s} (\pi_{s, \gamma} \circ F_s\circ\eta_s(x\gamma^{-1}) - \pi_{s, \gamma} \circ \tilde{F}_s\circ\eta_s(x\gamma^{-1})  )  }.
\end{eqnarray*}
If $x\notin\bigsqcup_{s=1}^S \bigsqcup_{\gamma\in \Gamma_s}U_s\gamma$, then $$\eta_s(x\gamma^{-1}) = \mathbf{0},\quad \gamma\in\Gamma_s,\ s=1, ..., S.$$ Hence $$\pi_{s, \gamma} \circ F_s\circ\eta_s(x\gamma^{-1}) = \pi_{s, \gamma} \circ \tilde{F}_s\circ\eta_s(x\gamma^{-1}) =0,\quad \gamma\in\Gamma_s,\ s=1, ..., S,$$ and 
\begin{equation}\label{small-pert-1}
\tilde{f}(x) = g(x).
\end{equation}
If $x\in\bigsqcup_{s=1}^S \bigsqcup_{\gamma\in \Gamma_s}U_s\gamma$, then there exist $s_0\in\{1, ..., S\}$ and $\gamma_0\in\Gamma_{s_0}$ such that 
$$\textrm{$x$ is only in $U_{s_0}\gamma_0$}.$$ 
Then
$$
\eta_s(x\gamma^{-1}) = \mathbf{0},\quad \gamma\in\Gamma_s,\ s \neq s_0,
$$
and
$$ 
\eta_{s_0}(x\gamma^{-1}) = \mathbf{0},\quad \gamma\neq\gamma_0.
$$ 
Hence
\begin{eqnarray*}
&&\abs{\sum_{s=1}^S\sum_{\gamma\in \Gamma_s} (\pi_{s, \gamma} \circ F_s\circ\eta_s(x\gamma^{-1}) - \pi_{s, \gamma} \circ \tilde{F}_s\circ\eta_s(x\gamma^{-1})  )  } \\
&=& \abs{\pi_{s_0, \gamma_0} \circ F_{s_0}\circ\eta_{s_0}(x\gamma_0^{-1}) - \pi_{s_0, \gamma_0} \circ \tilde{F}_{s_0}\circ\eta_{s_0}(x\gamma_0^{-1})   } \\
&<& \frac{\eps}{3},
\end{eqnarray*}
and $$\abs{\tilde{f}(x) - g(x)} < \frac{\eps}{3}.$$ Together with \eqref{small-pert-1}, one has
$$\norm{\tilde{f} - g} < \frac{\eps}{3};$$ together with \eqref{small-pert-0}, one has
$$\norm{f - g} < \frac{2\eps}{3}<\eps.$$

Let us estimate $\mathrm{ocap}(\{x\in X: g(x) = 0\})$. Fist, note that for an arbitrary $x\in B_s$, where $s\in\{1, 2, ..., S\}$,  by \eqref{small-zero}, 
\begin{equation}\label{small-dim}
\{\gamma\in\Gamma_s: g(x\gamma) = 0\} =  \{\gamma\in\Gamma_s: \sum_{s=1}^S\sum_{\gamma\in \Gamma_s}\pi_{s, \gamma}(\tilde{F}_s(\eta_s(x))) = 0\} 
 \leq \mathrm{dim}\Delta_s. 
\end{equation}

Let $\Gamma_0\subseteq\Gamma$ be a finite set which is sufficiently invariant such that 
\begin{equation}\label{small-boundary}
\frac{\abs{\mathrm{int}_{\bigcup_{s=1}^S(\Gamma^2_{s_i})^{-1}}\Gamma_0}}{\abs{\Gamma_0}}> 1-\frac{\eps}{3},
\end{equation} 
and
\begin{equation}\label{small-lf}
\frac{1}{\abs{\Gamma_0}}\abs{\{\gamma\in\Gamma_0: x\gamma \in X\setminus\bigsqcup_{s=1}^S\bigsqcup_{\gamma\in\Gamma_s} B_s\gamma\}}<\frac{\eps}{3},\quad x\in X.
\end{equation}

Denote by $$K= \bigcup_{s=1}^S\Gamma_s,$$ and note that  
${\mathrm{int}_{\bigcup_{s=1}^S\Gamma^{-1}_s}\Gamma_0} = \Gamma_0\cap(\Gamma_0K)$

Let $x\in X$ be arbitrary, and consider the orbit $x\Gamma$. The partition $$X=(X\setminus\bigsqcup_{c=1}^S\bigsqcup_{\gamma\in\Gamma_s}B_s\gamma) \sqcup \bigsqcup_{c=1}^S\bigsqcup_{\gamma\in\Gamma_s}B_s\gamma$$ induces a partition of $x\Gamma$; since the action is free, this induces a partition of $\Gamma$: %by $\Gamma_1, ..., \Gamma_S$:
$$\Gamma = \Lambda \sqcup \bigsqcup_{i=1}^\infty c_i \Gamma_{s(i)},$$
where $s(i)\in\{1, 2, ..., S\}$ for each $i=1, 2, ...$, %$\Gamma_{s(i)} = \Gamma_1, \Gamma_2, ..., \Gamma_S$, 
$$\Lambda=\{\gamma\in\Gamma: x\gamma\in X\setminus\bigsqcup_{s=1}^S\bigsqcup_{\gamma\in\Gamma_s} B_s\gamma \},$$
and $c_i\in \Gamma$, $i=1, 2, ...$, satisfying $xc_i \in B_{s(i)}$. Restrict this partition to $\Gamma_0$, one has 
$$\Gamma_0 = (\Gamma_0 \cap \Lambda) \cup \bigsqcup_{c_i\Gamma_{s(i)}\nsubseteq\Gamma_0} (\Gamma_0 \cap (c_i\Gamma_{s(i)})) \cup \bigsqcup_{c_i\Gamma_{s(i)}\subseteq \Gamma_0} c_i\Gamma_{s(i)}.$$

A straightforward calculation shows that if $\gamma\in \Gamma_0\cap (c_i\Gamma_{s(i)})$ and $c_i\Gamma_{s(i)} \nsubseteq \Gamma_0$, then $\gamma \notin \mathrm{int}_{(\Gamma_{s(i)}^2)^{-1}}\Gamma_0$. Therefore
$$ \bigsqcup_{c_i\Gamma_{s(i)}\nsubseteq\Gamma_0} (\Gamma_0 \cap c_i\Gamma_{s(i)}) \subseteq \Gamma_0\setminus\mathrm{int}_{\bigcup_{s=1}^S(\Gamma^2_{s_i})^{-1}}(\Gamma_0) =: \partial_{\bigcup_{s=1}^S(\Gamma^2_{s_i})^{-1}}\Gamma_0,$$
and, by \eqref{small-dim}, \eqref{small-boundary}, \eqref{small-lf}, and \eqref{small-ratio},
\begin{eqnarray*}
& & \frac{1}{\Gamma_0}\abs{\{\gamma\in\Gamma_0: g(x\gamma) = 0\}} \\
& \leq & \frac{\abs{\Gamma_0\cap\Lambda}}{\abs{\Gamma_0}} +  \frac{\abs{\partial_{\bigcup_{s=1}^S(\Gamma^2_{s_i})^{-1}}\Gamma_0}}{\abs{\Gamma_0}} + \frac{1}{\abs{\Gamma_0}}\sum_{c_i\Gamma_{s(i)}\subseteq\Gamma_0} \mathrm{dim}\Delta_{s(i)} \\
& \leq & \frac{\eps}{3} + \frac{\eps}{3} + \frac{\sum_{c_i\Gamma_{s(i)}\subseteq\Gamma_0} \mathrm{dim}\Delta_{s(i)}}{\sum_{c_i\Gamma_{s(i)}\subseteq\Gamma_0} \abs{\Gamma_{s(i)}}} \\
& \leq & \frac{2\eps}{3} + \frac{\eps}{3} = \eps,
\end{eqnarray*}
as desired.
\end{proof}

\begin{rem}
Note that if $\Gamma = \Int^d$, it follows from Theorem 1.10.1 and Theorem 1.10.3 of \cite{GY-ETDS-2011} that $$\mathrm{TRP} + \mathrm{mdim}0 \Leftrightarrow \mathrm{SBP},$$ where TRP stands for the Topological Rokhlin Property in the sense of 1.9 of \cite{GY-ETDS-2011} ($\mathrm{edim}(X, \Int^d)\leq l$ densely for some $l\in\mathbb N$ is actually not needed in Theorem 1.10.3). It is easy to see that URP implies TRP. Therefore, in this case, the statement of Theorem \ref{md0-2-sbp} is covered by Theorem 1.10.3 of \cite{GY-ETDS-2011}. It was also proved later in \cite{GLT-Zk} (Corollary 5.4) that $$ \mathrm{mdim}0 \Leftrightarrow \mathrm{SBP}$$ for any $\Int^d$-actions with marker property.
\end{rem}

With the Uniform Property Gamma and \cite{CETWW-CP}, Kerr and Szabo has the following: 
\begin{thm}[Corollary 9.5 of \cite{KS-comparison}]\label{KS}
Assume that $(X, \Gamma)$ has the small boundary property. Then, $\mathrm{C}(X)\rtimes\Gamma$ has the strict comparison if and only if it is $\mathcal Z$-stable.
\end{thm}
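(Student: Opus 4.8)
Since this statement is quoted from \cite{KS-comparison}, the plan is only to recall the architecture of its proof. The backward implication is the soft one and uses nothing about $(X,\Gamma)$ beyond simplicity of $A=\mathrm{C}(X)\rtimes\Gamma$: a $\mathcal Z$-stable C*-algebra has almost unperforated Cuntz semigroup by a theorem of R\o rdam, and for a simple C*-algebra almost unperforation is equivalent to the strict comparison of Definition \ref{rank-fn}. The forward implication is the substantial one; the plan is to route it through the Uniform Property Gamma and then invoke \cite{CETWW-CP}.

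The crucial input, which is Theorem 9.4 of \cite{KS-comparison}, is that the small boundary property \emph{alone} forces $A$ to have Uniform Property Gamma: for every $n\in\mathbb N$ there are pairwise orthogonal positive contractions $q_1,\dots,q_n$ in the relative commutant of $A$ inside its uniform tracial ultrapower with $q_1+\cdots+q_n=1$ and $\tau(q_j)=\tfrac1n$ for every limit trace. One can produce these mirroring the argument of the present Sections 3 and 4. Given $n$, $\eps>0$ and a finite set $\mathcal F\subseteq A$, the small boundary property supplies a Rokhlin-type tower decomposition of the action -- closed bases $B_1,\dots,B_S\subseteq X$, open neighbourhoods $U_s\supseteq B_s$ whose boundaries carry arbitrarily small mass for \emph{every} $\mu\in\mathcal M_1(X,\Gamma)$, and $(K,\delta)$-invariant shapes $\Gamma_1,\dots,\Gamma_S\subseteq\Gamma$ -- with the translates $B_s\gamma$ ($\gamma\in\Gamma_s$) pairwise disjoint and exhausting $X$ up to a set of small orbit capacity, and moreover (the small boundary property forcing zero mean dimension) with $\dim\overline{U_s}$ negligible relative to $k_s:=\abs{\Gamma_s}$. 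Exactly as in Lemma \ref{cut-down}, this yields a subhomogeneous $C\cong\bigoplus_{s=1}^S\mathrm{M}_{k_s}(\mathrm{C}_0(U_s))\subseteq A$, a decomposition $f_i\approx f_i^{(0)}+f_i^{(1)}$ with $f_i^{(1)}\in C$, and a positive $h\in\mathrm{C}(X)\cap C$ assembled from weights varying slowly along each tower, so that $f_i^{(0)}\perp h$, $\norm{[h,f_i]}<\eps$, $\norm{[f_i^{(1)},h]}<\eps$, and $\tau(1-h^2)<\eps$ for all $\tau\in\mathrm{T}(A)$. Since $C$ (after a small perturbation, of finite nuclear dimension) has fibre dimensions negligible against its matrix sizes, it is tracially divisible (as in Corollary \ref{md0-divisible}), so one can split an approximation of its unit into $n$ mutually orthogonal positive elements $e_1,\dots,e_n\in C$ of equal trace with $e_1+\cdots+e_n\approx h$ and each $e_j$ approximately commuting with the $f_i^{(1)}$. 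Conjugating by the slowly-varying $h$, the elements $p_j:=h^{1/2}e_jh^{1/2}$ then satisfy $\norm{[p_j,f_i]}<\eps$ (the $f_i^{(0)}$-part absorbed by $h$, the $f_i^{(1)}$-part by the choice of the $e_j$, and $h$ itself approximately central), $\norm{p_ip_j}<\eps$ for $i\ne j$, $\abs{\tau(p_j)-\tfrac1n}<\eps$ and $\tau(1-\sum_jp_j)<\eps$, all uniformly over $\mathrm{T}(A)$ (after starting from a correspondingly smaller $\eps$). Letting $\eps\downarrow0$, $\mathcal F\uparrow A$ and passing to the uniform tracial ultrapower converts the $p_j$ into the required $q_j$.

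Granting this, the theorem follows from \cite{CETWW-CP}: $A$ is simple, separable, unital and nuclear (the last because $\Gamma$ is amenable) and admits a tracial state (as $\mathcal M_1(X,\Gamma)\ne\varnothing$), hence is stably finite; being of this form with both strict comparison and Uniform Property Gamma, it is $\mathcal Z$-stable. The delicate point -- and the only place the hypothesis really bites -- is the middle step: manufacturing the partition of unity $p_1,\dots,p_n$ with \emph{simultaneous} control of the commutators with $\mathcal F$, of the trace defect $1-\sum_jp_j$, and of each value $\tau(p_j)$, uniformly over the possibly infinite-dimensional simplex $\mathrm{T}(A)$. It is precisely the small boundary property, rather than the weaker condition $\mathrm{mdim}(X,\Gamma)=0$, that makes the Rokhlin towers available with boundary mass uniformly small over $\mathcal M_1(X,\Gamma)$, so that the cut-offs defining $C$ and $h$ are genuinely continuous while remaining large in trace; once those towers are in hand, the remainder of the argument proceeds as in Lemma \ref{cut-down} and Corollary \ref{md0-divisible}.
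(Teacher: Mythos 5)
Your high-level architecture coincides with how the paper treats this statement: it is simply imported from \cite{KS-comparison}, the forward direction resting on Theorem 9.4 there (small boundary property implies Uniform Property Gamma) combined with \cite{CETWW-CP}, and the backward direction on R{\o}rdam's theorem that $\mathcal Z$-stability gives an almost unperforated Cuntz semigroup, hence strict comparison for a simple algebra. Had you left Theorem 9.4 as a black box, nothing further would be required.

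The gap is in your sketch of that crucial input. You assert that the SBP supplies tower decompositions with $\mathrm{dim}\,\overline{U_s}$ negligible relative to $\abs{\Gamma_s}$, and you then run a divisibility argument ``as in Lemma \ref{cut-down} and Corollary \ref{md0-divisible}.'' Neither step is available from the SBP. The SBP controls the invariant measures of boundaries of sets; it gives no bound at all on the covering dimension of tower bases (the space $X$ may be infinite dimensional, e.g.\ a Hilbert-cube extension), and even $\mathrm{mdim}(X,\Gamma)=0$ does not yield such a bound directly: in Proposition \ref{cont-approx} the smallness of $\mathrm{dim}([Z_s])/n_s$ is extracted from Theorem 3.8 of \cite{Niu-MD-Z}, which uses the URP together with zero mean dimension, and only controls distinguished closed subsets $[Z_s]$ of the base, not $\overline{U_s}$. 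Moreover, Lemma \ref{cut-down} is a URP statement and Corollary \ref{md0-divisible} additionally needs (COS); the paper proves $\mathrm{URP}+\mathrm{mdim}\,0\Rightarrow\mathrm{SBP}$ (Theorem \ref{md0-2-sbp}), not the converse, so none of the Section 3--4 machinery can be invoked from the SBP alone. Kerr and Szab\'o's actual argument is structured differently: they show the SBP is equivalent to almost finiteness in measure, which provides open castles whose levels have boundary mass uniformly small over $\mathcal M_1(X,\Gamma)$ (smallness in measure, not in orbit capacity as you state); the Uniform Property Gamma contractions $q_1,\dots,q_n$ are then manufactured by splitting each F{\o}lner shape $\Gamma_s$ into $n$ subsets of nearly equal cardinality and summing plateau functions over the corresponding tower levels, with approximate centrality coming from the invariance of the shapes. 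No subhomogeneous approximation, dimension reduction, or tracial divisibility enters. As written, your middle step would fail; either cite Theorem 9.4 of \cite{KS-comparison} outright or reproduce that castle-splitting construction.
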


Thus, together with Theorem \ref{md0-2-sbp} and Theorem 4.8 of \cite{Niu-MD-Z}, one has the following:
\begin{proof}[Alternative proof of Theorem \ref{Z-stable}]
Since $(X,  \Gamma)$ is assumed to have the (URP), by Theorem \ref{md0-2-sbp}, it has the (SBP) since it has mean dimension zero. Therefore, by Theorem \ref{KS}, in order to prove the theorem,  it is enough to show that $C(X)\rtimes\Gamma$ has the strict comparison of positive elements. But since $(X, \Gamma)$ has the (COS)  and zero mean dimension, the strict comparison of $\mathrm{C}(X)\rtimes\Gamma$ follows from Theorem 4.8 of \cite{Niu-MD-Z}.
\end{proof}

%\section{On the structure of $(\mathrm{C}(X)\rtimes\Gamma) \otimes \mathcal Q$} Assuming the (URP) for $(X, \sigma, \Gamma)$, let us show that the C*-algebra $(\mathrm{C}(X)\rtimes\Gamma) \otimes \mathcal Q$ can be tracially approximated by the point-line algebras. (It seems one is only able to get a weak tracial approximation by point-line algebras.)

\bibliographystyle{plainurl}
\bibliography{operator_algebras}

\end{document}